\def\NAT@def@citea{\def\@citea{\NAT@separator}}
\theoremstyle{plain}
\newtheorem{theorem}{Theorem}[section]
\newtheorem{lemma}[theorem]{Lemma}
\theoremstyle{definition}
\newtheorem{definition}[theorem]{Definition}
\theoremstyle{remark}
\newtheorem{remark}{Remark}
\newcommand{\R}{{\mathbb{R}}}
\newcommand{\N}{{\mathbb{N}}}
\newcommand{\U}{\mathcal{U}}
\newcommand{\dimx}{{n_x}}
\newcommand{\dimU}{{n_u}}
\newcommand{\dimpi}{{n_{\pi}}} 
\newcommand{\numberconstraints}{{m}}
\newcommand{\bmx}{{\bm{x}}}
\newcommand{\bmu}{{\bm{u}}}
\newcommand{\bmpi}{{\bm{\pi}}}
\newcommand{\bmb}{{\bm{b}}}
\newcommand{\bmv}{{\bm{v}}}
\newcommand{\setforx}{X} 
\newcommand{\setforpi}{{X^{\pi}}} 
\newcommand{\objectivefunctioninx}{f}
\newcommand{\objectivefunctioninpi}{{f^\pi}} 
\newcommand{\regretfunctioninpi}{r}  
\newcommand{\optimalregretvalueinpi}{r}
\newcommand{\optimalvaluefunction}{{\phi}}
\newcommand{\Aforpi}{{A^{\pi}}}
\newcommand{\decisionrulevectorparameter}{{\bm{\pi}^0}} 
\newcommand{\decisionrulematrixparameter}{\Pi}
\newcommand{\extremalpointsofU}{\U^{extr}}
\begin{document}
	
	\articletype{ARTICLE TEMPLATE}
	
	\title{An Adaptive Three-Stage Algorithm For Solving Adjustable Min-Max-Regret Problems}
	
	\author{
		\name{Kerstin Schneider\textsuperscript{a}\thanks{CONTACT Kerstin Schneider. Email: kerstin.schneider@itwm.fraunhofer.de},  Helene Krieg\textsuperscript{a}, Dimitri Nowak\textsuperscript{a} and Karl-Heinz K\"ufer\textsuperscript{a}}
		\affil{\textsuperscript{a}Fraunhofer Institute for Industrial Mathematics ITWM, Kaiserslautern, Germany}
	}
	
	\maketitle
	
	\begin{abstract}
        This work uniquely combines an affine linear decision rule known from adjustable robustness with min-max-regret robustness. By doing so, the advantages of both concepts can be obtained with an adjustable solution that is not over-conservative. 
        This combination results in a bilevel optimization problem. For solving this problem, a three-stage algorithm which uses adaptive discretization of the uncertainty set via two criteria is presented  
        and its convergence is proven.
        The algorithm is applicable for an example of 
        optimizing a robust pump operation plan for a drinking water supply system facing uncertain demand. The algorithm shows a notable ability to scale, presenting an opportunity to solve larger instances that might challenge existing optimization approaches. 
	\end{abstract}
	
	\begin{keywords}
		Min-Max-Regret Robustness, Adjustable Robustness, Affine Linear Decision Rule, Adaptive Discretization, Bilevel Optimization
	\end{keywords}

	\section{Introduction}

        Robust optimization has emerged as a powerful framework for addressing optimization problems under uncertainty, a pervasive challenge in a multitude of practical applications. This approach is particularly advantageous when the uncertainty distribution is unknown, yet the set of potential scenarios is identifiable. Robust optimization strategies enable the formulation of solutions resilient to a range of uncertain conditions without the need for precise probability distributions.

        The robust optimization literature is rich and diverse, with foundational and advanced contributions from numerous researchers. Seminal works by Ben-Tal et al. \cite{BenTal.1998, BenTal.1999, BenTal.2000, BenTal.2007, BenTal.2009, BenTal.2015} and Bertsimas et al. \cite{Bertsimas.2011, Bertsimas.2016} have laid the groundwork for understanding and applying robust optimization across various contexts. Gabrel et al. \cite{Gabrel.2014} and Sözüer and Thiele's review \cite{Sözüer2016} provide comprehensive overviews of the state of robust optimization.

        The application of robust optimization principles has been demonstrated in various domains, including production and inventory management \cite{BenTal.2004, Qiu.2022}, timetabling \cite{Liebchen., Cicerone.2009, Schobel.2009}, flood protection \cite{Postek.2019}, engineering \cite{Bertsimas.2011, Isenberg.2021}, portfolio optimization \cite{Bertsimas.2011, Xidonas.2020}, and the design, operation, and management of water distribution systems \cite{Jung.2019}. These applications highlight the method's ability to provide robust solutions in unpredictable environments, showcasing its broad applicability and effectiveness in navigating the uncertainties inherent in complex decision-making processes.

        Within the robust optimization field, min-max-regret and adjustable robustness are two prominent methodologies. Min-max-regret focuses on minimizing the worst-case regret, which is the difference between the outcome of a decision and the best possible outcome in hindsight, across all uncertain scenarios \cite{Kouvelis.1997, Assavapokee.2008}. This concept is also known as deviation robust optimization \cite{Kouvelis.1997}. The regret of a scenario is the difference in the objective values of the robust solution to the deterministic solution of this scenario. 

        Adjustable robustness offers a dynamic approach where decisions can be staged, with some decisions made after partial uncertainty resolution, often modeled by an affine linear decision rule \cite{BenTal.2004, Delage.2015, Gorissen.2015, Marandi.2018, Postek.2019}. The framework of adjustable robust optimization distinguishes between \emph{here-and-now} and \emph{wait-and-see} decisions. Here-and-now decisions cannot depend on the uncertainty and have to be made before the uncertainty is revealed, whereas wait-and-see decisions can depend on (parts of) the uncertainty and are made when actual data is known \cite{BenTal.2004}. Adjustable robust problems with an affine linear decision rule are referred to as \emph{affinely adjustable robustness} \cite{BenTal.2004}. 

        The adjustable robust solution is usually less conservative than a classical robust solution due to the adjustability, but in both concepts, a worst-case-cost approach is taken \cite{BenTal.2004}. An approach for solving nonlinear robust optimization problems within this framework has been proposed in \cite{Isenberg.2021}, with an implementation available for Pyomo called PyROS \cite{Bynum.2021, Hart.2011}.
        
        The difficulty in min-max-regret robustness is that it results in such a bilevel optimization problem, where the first stage involves minimizing the maximal regret over all scenarios, and the second stage is to obtain the perfect information solution. 

        An overview of bilevel optimization can be seen in works by Dempe \cite{Dempe.2002, Dempe.2020}. Further, uncertainty can also occur in bilevel problems, Beck et al. \cite{Beck.2023} provide a survey on bilevel robust optimization. Specific algorithms for solving bilevel min-max-regret problems are discussed in the literature. For instance, Assavapokee et al. \cite{Assavapokee.2008} present an algorithm designed for problems with a mixed integer linear program in the first stage and a linear program in the second stage. Additionally, an algorithm for solving continuous min-max problems with finitely many constraints is proposed by Shimizu \cite{Shimizu.1980}.

        In this paper, adjustable robustness is combined with the min-max-regret approach into a new optimization problem that incorporates an affine linear decision rule. The considered problems have nonlinear objective functions and linear constraints with right-hand-side uncertainty. By integrating adjustability into the problem, the solution can respond to uncertainties as they become known. Since a min-max-regret approach is considered, the resulting decisions are less conservative than with a worst-case-cost approach. 

        The bilevel optimization problem formulated here has semi-infinite constraints, which is common in robust optimization problems with infinite uncertainty sets. To tackle such constraints, the authors utilize adaptive discretization methods, building upon the foundational algorithm by Blankenship and Falk \cite{Blankenship.1976}. These methods have been beneficial in a variety of applications, as evidenced by the work of Krieg et al. \cite{Krieg.2022} and Seidel and K\"ufer \cite{Seidel.2022}.

        This paper introduces an adaptive three-stage algorithm designed to solve the bilevel optimization problem by successively augmenting the discretization of the uncertainty set by scenarios, for which the current iterate solution violates feasibility or optimality. A numerical example is provided to illustrate the algorithm’s ability to scale. This scalability is critical in extending the applicability of robust optimization to more complex and sizeable problems. The authors prove convergence of the algorithm under certain assumptions on the problem functions.

        The structure of the remainder of this work is organized as follows: Section \ref{section:problem_description} fuses elements of adjustable robustness with the min-max-regret framework to formulate an optimization problem that utilizes an affine linear decision rule within a min-max-regret approach. Section \ref{section:algorithm} details the innovative adaptive three-stage algorithm developed to solve this problem and provides a proof of its convergence. Section \ref{section:results} presents numerical examples that illustrate the algorithm's performance and includes a comparative analysis with the affine adjustable robustness approach that focuses on worst-case optimization. The advantages and possible improvements of the presented algorithm are discussed in Section \ref{section:discussion} and the paper concludes with Section \ref{section:conclusion}, where the main findings are summarized.

        The results presented in this work are also contained in the PhD thesis of Kerstin Schneider \cite{Schneider.2024}, which has been submitted at the end of March 2024.

	\section{Problem description}\label{section:problem_description}
	Let $X\subseteq \R^\dimx$ and $\setforpi \subseteq \R^\dimpi$, $n_x \in \mathbb{N}$, be compact sets and consider an interval-shaped uncertainty set 
	\begin{equation}
		\U \coloneqq \left[\bmu_{min}, \bmu_{max} \right]^\dimU \subseteq \R^\dimU.
	\end{equation}	
    For $n_u \in \mathbb{N}$, denote the set of all extremal scenarios of $\U$ by
	\begin{equation}
		\extremalpointsofU \coloneqq \{ \bmu \in \U | u_i \in \{u_i^{min}, u_i^{max}\}, \ i=1,..., \dimU\}.
	\end{equation} 
	Throughout this work, the following uncertain optimization problem is considered:
	\begin{equation}\label{opt_problem:without_robustness_concept}
		\begin{aligned}
			\min_{\bm{x}\in X} \quad & \objectivefunctioninx(\bm{x}) \\
			\text{s.t. } \ & A \bm{x} \leq \bm{b}(\bm{u}), \quad \forall \bm{u} \in \U,
		\end{aligned}
	\end{equation} 
	with a continuous and convex function $f: \setforx \rightarrow \R$, and $m \in \mathbb{N}$ linear constraints defined by a matrix $A \in \R^{\numberconstraints \times \dimx}$ and a vector $ \bm{b}(\bm{u}) \in \R^{\numberconstraints}$. It is assumed that the entries of $\bm{b}$ can depend linearly on the uncertain parameter $\bm{u} \in \U$.

    \subsection{Introduction of an affine decision rule}
	In the context of affine adjustable robustness, the decision variable $\bm{x}$ is replaced by an affine linear decision rule 
	\begin{equation}
		\bm{x} = \decisionrulevectorparameter + \decisionrulematrixparameter \bm{u}.
		\label{eq:decision_rule}
	\end{equation}
    Doing so, one can optimize for the parameters $\decisionrulevectorparameter$ and $\decisionrulematrixparameter$. When the uncertainty $\bm{u} \in \U$ is revealed, the solution $\bm{x}\in \setforx$ can be obtained via the decision rule \eqref{eq:decision_rule}. This has the advantage, that the decision is adjusted to at least some part of the uncertain parameters as soon as they get known. 
    
	The new decision variables are the vector $\decisionrulevectorparameter \in \R^{\dimx}$ and the matrix $\Pi \in \R^{\dimx \times \dimU}$. Thereby, the assumption is made that the entries $\pi_{ij}$ of $\decisionrulematrixparameter$ are restricted to $|\pi_{ij}| \leq N$ for some given amount $N \in \R$. This restriction can be motivated by the consideration that the adjustment to a realized scenario must not be arbitrarily large. The number $N$ can be chosen large enough, such that this is not a severe restriciton. 
    For easier notation, the new decision variables are summarised in the vector $\bm{\pi}$ that contains all entries from this vector-matrix-pair $(\decisionrulevectorparameter, \decisionrulematrixparameter)$, i.e.,\ $\bm{\pi} \in \setforpi \subseteq \R^{\dimpi}$ with $\dimpi = (\dimU +1) \dimx$. Since the entries of $\decisionrulematrixparameter$ are restricted by $N$ and the decision $\bmx$ resulting from the decision rule \eqref{eq:decision_rule} must be in the compact set $\setforx$, the entries of $\decisionrulevectorparameter$ are also restricted. Hence, the set $\setforpi \subset \R^\dimpi$ is compact.
 
	Next, the objective is reformulated in terms of the  decision rule \eqref{eq:decision_rule}
	\begin{equation}
		\objectivefunctioninpi(\bm{\pi}, \bm{u}) \coloneqq \objectivefunctioninx(\decisionrulevectorparameter + \decisionrulematrixparameter \bm{u}).
	\end{equation}
	Further, define the matrix $\Aforpi(\bm{u}) \in \R^{\numberconstraints \times \dimpi}$ with entries that can depend linearly on $\bm{u} \in \U$ such that it fulfills the following equation:
	\begin{equation}
		\Aforpi(\bm{u}) \cdot \bm{\pi} = A\cdot (\decisionrulevectorparameter + \decisionrulematrixparameter \bm{u}).
	\end{equation}
	The `$\cdot$' in the equation above indicates a matrix-vector-multiplication.
 
	Replacing $\bm{x}$ by the affine linear decision rule \eqref{eq:decision_rule} and using these definitions results in the following  affinely adjustable robust 
    problem derived from Problem \eqref{opt_problem:without_robustness_concept}:
	\begin{equation}
		\label{opt_problem:affine_adjustable}
		\begin{aligned}
			\min_{\bm{\pi} \in \setforpi} \max_{\bm{u}\in \U} \quad & \objectivefunctioninpi(\bm{\pi}, \bm{u}) \\
			\text{s.t. } \ & \Aforpi(\bm{u}) \bmpi  \leq \bm{b}(\bm{u}) \quad \forall \bm{u} \in \U.
		\end{aligned}
	\end{equation}
	    As the maximum over all uncertain scenarios is minimized in Problem \eqref{opt_problem:affine_adjustable}, this is still a worst-case-approach which makes sure that the maximal possible value of the objective function is as small as possible. However, the possibility that other than worst-case scenarios occur is not taken into account by this objective. As a result, a solution to Problem \eqref{opt_problem:affine_adjustable} can result in very high function values when evaluated with respect to such a non-worst-case scenario. 
        As a relief to this issue, \eqref{opt_problem:affine_adjustable} is combined with a min-max-regret approach in the following.

\subsection{The robust regret objective}
	The regret $r(\bm{\pi}, \bm{u})$ of the decision $\bm{\pi}$ in the scenario $\bm{u}$ is defined as the difference of the objective value of the robust solution $\bm{\pi}$ in case of scenario $\bm{u}$ to the objective value of the perfect information solution $\bm{\pi}^*(\bm{u})$ in this scenario. 
	The perfect information solution $\bm{\pi}^*(u)$ solves the deterministic problem for the fixed scenario $\bm{u}$:
	\begin{equation}\label{opt_problem:lower_level_in_pi}
		\begin{aligned}
			\min_{\bm{\pi}^*(\bm{u}) \in \setforpi} \quad & \objectivefunctioninpi(\bm{\pi}^*(\bm{u}), \bm{u}) \\
			\text{s.t.} \quad & \Aforpi(\bm{u}) \pi^*(\bm{u}) \leq b(\bm{u}).
		\end{aligned}
	\end{equation}
	Since only one scenario is considered in Problem \eqref{opt_problem:lower_level_in_pi}, $\bm{\pi}^*(\bm{u})$ can be replaced by $\bm{x}^*(\bm{u})$ using the decision rule \eqref{eq:decision_rule}. Then, the affine adjustable min-max-regret problem is

	\begin{equation}
		\begin{aligned}
			\min_{\bm{\pi} \in \setforpi} \ \max_{\bm{u}' \in \mathcal{U}} \ \ 
   & \objectivefunctioninpi(\bm{\pi}, \bm{u}') - \objectivefunctioninpi(\bm{\pi}^*(\bm{u}'), \bm{u}') \\
			\text{s.t. } & \Aforpi (\bm{u}) \bm{\pi} \leq \bm{b}(\bm{u}) \quad \forall \bm{u} \in \mathcal{U} \\
			& \min_{\bm{x}^*(\bm{u}') \in \setforx} \quad \objectivefunctioninx(\bm{x}^*(\bm{u}')) \\
			& \quad \ \text{  s.t.  } \quad \  A \bm{x}^*(\bm{u}') \leq \bm{b}(\bm{u}')
		\end{aligned} \label{opt_problem:minmaxregret_lower_level_without_decision_rule} \tag{$P_{\text{Min-Max-Regret}}$}
	\end{equation}
	
	This problem is a two-stage optimization problem. Beyond that, the upper level problem has a semi-infinite constraint. 
	In the following, a three-stage adaptive algorithm for solving these kind of problems is introduced for the case of a continuous and convex objective function $\objectivefunctioninx$.
 
 Similar to \cite{Bank.1982}, the \emph{feasible set mapping} of problem \ref{opt_problem:subproblem_min_cost}  
	is denoted by 
	\begin{equation} \label{eq:feasible_set_lower_level_problem}
		F: \U \rightrightarrows X\subset \R^{\dimx}, \quad \bmu \mapsto \left\{ \left. \bmx \in \setforx \right| \left( A \bmx - \bmb(\bmu)\right)_i \leq 0, \ 1 \leq i \leq \numberconstraints \right\}.
	\end{equation}
 Then, the regret of a decision $\bmpi\in \setforpi$ and a scenario $\bmu \in \U$ is defined by
 \begin{equation}
     \regretfunctioninpi(\bmpi, \bmu) \coloneqq \objectivefunctioninpi(\bmpi, \bmu) - \min_{\bmx^*(\bmu) \in F(\bmu)} \objectivefunctioninx(\bmx^*(\bmu)).
 \end{equation}
	
	\section{Algorithm}\label{section:algorithm}
	The algorithm introduced in this chapter is an adaptive discretization algorithm consisting of three stages that finds approximate solutions of bilevel problems with a semi-infinite constraint like \ref{opt_problem:minmaxregret_lower_level_without_decision_rule}.
 
    In the following, discretizations of the uncertainty set $\U$ are denoted with a dot ($\dot{\U}$). Figure \ref{fig:3_stage_algo} provides an overview of the different stages of the algorithm. First, an initial discretization $\dot{\U}^1$ is chosen and the perfect-information-problem \eqref{opt_problem:lower_level_in_pi} which actually is the lower level problem of \ref{opt_problem:minmaxregret_lower_level_without_decision_rule},
	\begin{equation}
		\begin{aligned}
			\min_{\bmx(\bmu)\in \setforx} \quad & \objectivefunctioninx(\bmx(\bmu)) \\
			\text{s.t. } & A \bmx(\bmu) \leq \bmb(\bmu),
		\end{aligned}
		\tag{$P_{\text{Lower Level}}(\bmu)$} \label{opt_problem:subproblem_min_cost}
	\end{equation}
	is solved for every scenario $\bmu\in \dot{\U}^1$ in the initial discretization.
	Then, the \emph{discretized min-max-regret problem}
	\begin{equation}
		\begin{aligned}
			\min_{\bmpi \in \setforpi} \max_{\bmu' \in \dot{\mathcal{U}}} \ \  & \objectivefunctioninpi(\bm{\pi}, \bm{u}') - \objectivefunctioninpi(\bm{\pi}^*(\bm{u}'), \bm{u}') \\
			\text{s.t. } & \Aforpi(\bmu) \bmpi \leq \bmb (\bmu), \quad \forall \bmu \in \dot{\mathcal{U}} \\
			& \min_{\bmx^*(\bmu') \in \setforx} \ \  f(\bmx^*(\bmu')) \\
			& \quad \quad \  \text{s.t. } \  A \bmx^*(\bmu') \leq \bmb(\bmu')
		\end{aligned}
		\tag{$P_{\text{Min-Max-Regret}}(\dot{\mathcal{U}})$}\label{opt_prob:subproblem_min_max_regret}
	\end{equation}
	is solved for the initial discretization $\dot{\U}^1$. The solution $\bmpi^1$ of this problem is feasible for every $\bmu \in \dot{\U}^1$. The maximal regret of $\bmpi^1$ with respect to $\dot{\U}^1$ is denoted by $r^1$, i.e.,
	\begin{equation}
		r^k \coloneqq \max_{\bmu \in \dot{\U}^k} r(\bmpi^k, \bmu).
	\end{equation} 
	The next question is, whether $\bmpi^1$ is feasible for the whole uncertainty set $\U$.
	This is checked in the second stage of the algorithm, where the \emph{max-infeasibility problem}
	\begin{equation}
		\max_{\bmu\in \mathcal{U}}  \ \max_{1\leq j \leq \numberconstraints} \{(\Aforpi(\bmu)\bmpi - \bmb(\bmu))_j\}
		\tag{$P_{\text{Max-Infeasibility}}(\bmpi, \U)$}\label{opt_prob:subproblem_max_infeasibility},
	\end{equation}
	is solved for $\bmpi^1$. The solution of this problem is denoted by $\bmu^k$. With these $\bmpi^1$ and $\bmu^1$, the \emph{feasibility criterion} is tested.
	\begin{definition}(Feasibility criterion.)
		The iterates $\bmpi^k$ and $\bmu^k$ fulfill the \emph{feasibility criterion}, if
		\begin{equation}
			\max_{1\leq j\leq \numberconstraints} \left\{ \left( \Aforpi(\bmu^k)\bmpi^k-\bmb(\bmu^k) \right)_j \right\} \leq 0. \label{eq:feasibility_criterion}
		\end{equation}	 
	\end{definition}
	If the feasibility criterion is violated, there exists $\bmu\in \U$ for which $\bmpi^1$ is not feasible and $\bmu^1$ is the scenario that violates one of the linear constraints the most. In this case, this scenario is added to the discretization of $\U$, i.e.,\ $\dot{\U}^2 = \dot{\U}^1 \cup \{\bmu^1\}$. The algorithm then returns to the \nth{1} stage, where first $P_{\text{Lower Level}}(\bmu^1)$ and afterwards $P_{\text{Min-Max-Regret}}(\dot{\mathcal{U}}^2)$ is solved. This loop is repeated until the current iterates $\bmpi^k$ and $\bmu^k$ fulfill the feasibility criterion.
	When this is the case, the algorithm enters the \nth{3} stage, where the \emph{max-regret problem}
        \begin{equation}
		\begin{aligned}
			\max_{\bmu \in \mathcal{U}, \bmx^*(\bmu) \in \setforx} \quad & \objectivefunctioninpi(\bmpi, \bmu) - \objectivefunctioninx(\bmx^*(\bmu))\\
			\text{s.t. }\quad  & A \bmx^*(\bmu) \leq \bmb(\bmu)
		\end{aligned}
		\tag{$P_{\text{Max-Regret}}(\bmpi, \U)$}\label{opt_prob:subproblem_max_regret}
	\end{equation}
	is solved in order to find the scenario $\bmu^k \in \U$, for which the current solution $\bmpi^k$ of the discretized min-max-regret problem has the largest regret. The maximal regret obtained from this problem and the maximal regret over the discretized uncertainty set $\dot{\U}^k$ are compared in the \emph{$\varepsilon$-termination criterion}:
	\begin{definition}\label{def:termination_criterion}($\varepsilon$-termination criterion.) 
		For some given $\varepsilon>0$, the iterates $\bmpi^k$ and $\bmu^k$ fulfill the $\varepsilon$-\emph{termination criterion}
		, if
		\begin{equation}
			\regretfunctioninpi(\bmpi^k, \bmu^k) - \regretfunctioninpi^k  < \varepsilon \label{eq:termination_criterion}.
		\end{equation}
	\end{definition}
	If the regrets differ by more than $\varepsilon$, the scenario with the largest regret, $\bmu^k$, is added to the discretization of the uncertainty set, i.e.,\ $\dot{\U}^{k+1} = \dot{\U}^k \cup \{\bmu^k\}$ and the algorithm returns to the \nth{1} stage. Otherwise, the current solution $\bmpi^k$ is accepted as approximate solution of the min-max-regret problem \ref{opt_problem:minmaxregret_lower_level_without_decision_rule} and the algorithm terminates. The steps of the algorithm are stated in Algorithm \ref{alg:3_stage_algo}.
	
	\begin{algorithm}[H] 
		\caption{Adaptive 3-stage algorithm for min-max-regret robustness.} \label{alg:3_stage_algo}
		\begin{algorithmic}[1]
			\State Initial discretization: $\dot{\mathcal{U}}^1 \subset \U, k=1$
			\State $\varepsilon$-termination criterion $=$ False
			\State Solve $P_{\text{Lower Level}}(\bmu)$ for every $\bmu \in \dot{\mathcal{U}}^1$  $\rightarrow$ solutions used to solve $P_{\text{Min-Max-Regret}}(\dot{\mathcal{U}}^1)$ in next step \label{ln:initial_solve_lower_level}
			\State Solve  $P_{\text{Min-Max-Regret}}(\dot{\mathcal{U}}^1)$ $\rightarrow$ solution $\bmpi^1$
			\State Solve $P_{\text{Max-Infeasibility}}(\bmpi^1, \U)$ $\rightarrow$ solution $\bmu^1\in \U$
			\While{$\varepsilon-$termination criterion \eqref{eq:termination_criterion} is not fulfilled}
			\State $k=k+1$ \label{ln:start_2nd_stage}
			\State $\dot{\mathcal{U}}^{k} = \dot{\mathcal{U}}^{k-1} \cup \{\bmu^{k-1}\}$
			\State Solve $P_{\text{Lower Level}}(\bmu^{k-1})$ $\rightarrow$ solution used to solve $P_{\text{Min-Max-Regret}}(\dot{\mathcal{U}}^k)$ in next step \label{ln:SolvePLowerLevel}
			\State Solve $P_{\text{Min-Max-Regret}}(\dot{\mathcal{U}}^k)$ $\rightarrow$ solution $\bmpi^k$, optimal value $\regretfunctioninpi^k$ \label{ln:SolvePMinMaxRegretDiscretized}
			\State Solve $P_{\text{Max-Infeasibility}}(\bmpi^k , \U)$ $\rightarrow$ solution $\bmu^{k} \in \U$ \label{ln:SolvePFeas}
			\If {feasibility criterion \eqref{eq:feasibility_criterion} is not fulfilled} go to \ref{ln:start_2nd_stage} \label{ln:startif}
			\EndIf \label{ln:endif}
			\State Solve $P_{\text{Max-Regret}}(\bmpi^{k},\U)$ $\rightarrow$ replace $\bmu^k$ by this solution $\bmu^k \in \U$ \label{ln:SolvePMaxReg}
			\EndWhile
		\end{algorithmic}
	\end{algorithm}

	\begin{figure}
		\centering
		\includegraphics[width=0.85\textwidth]{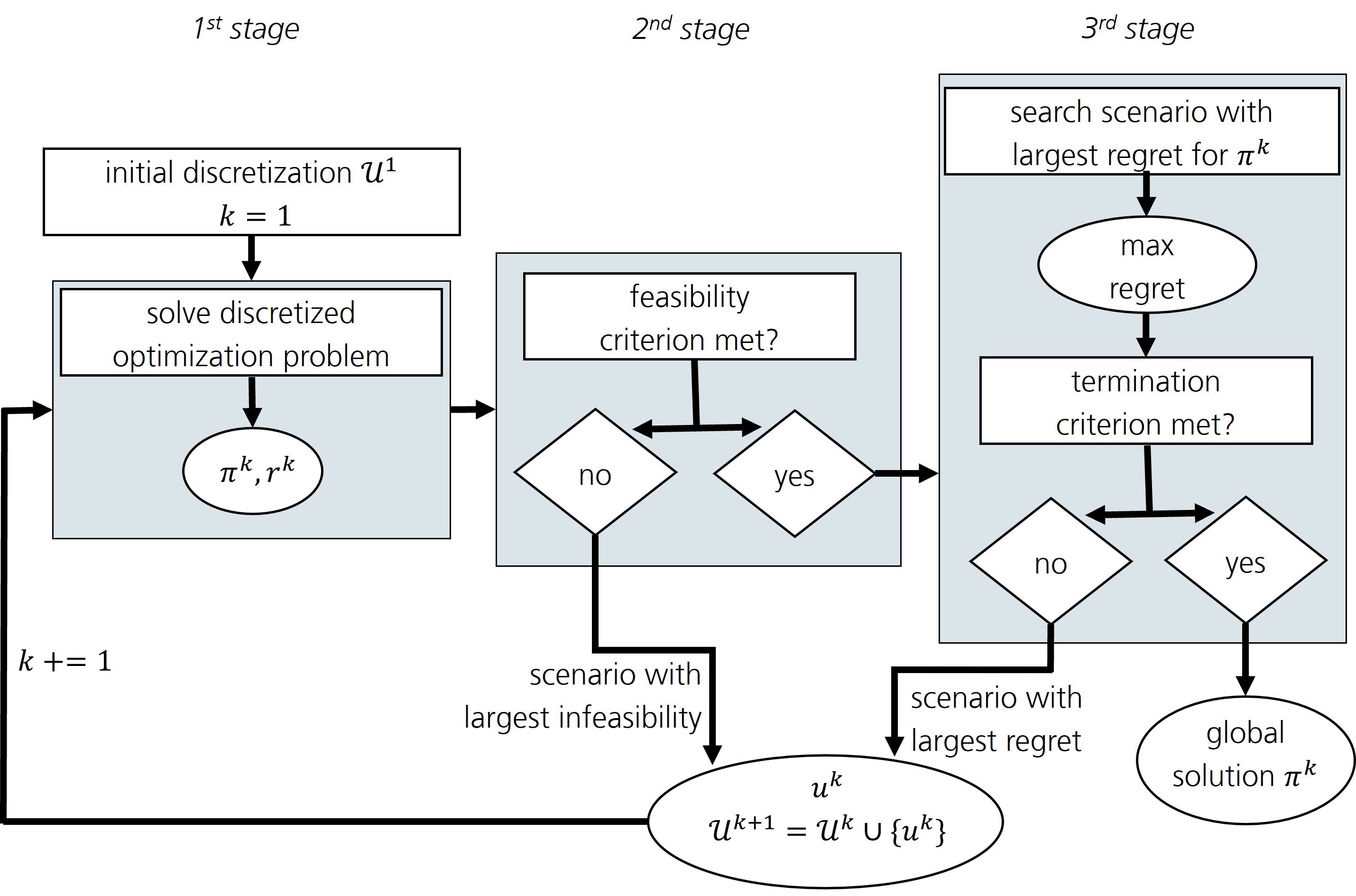}
		\caption{In the \nth{1} stage of Algorithm \ref{alg:3_stage_algo}, the solution of the discretized min-max-regret problem is obtained. The feasibility and optimality of this solution is then checked in the \nth{2} and \nth{3} stage, respectively.
		}
		\label{fig:3_stage_algo}
	\end{figure}
	
     \subsection{Convergence of the algorithm}
     In the following, it is proven that the algorithm converges. Further, an estimate on the distance of an approximate solution generated by Algorithm \ref{alg:3_stage_algo} to the actual solution of \ref{opt_problem:minmaxregret_lower_level_without_decision_rule} in terms of the regret-difference calculated in the $\varepsilon$-termination criterion is given. Before stating and proving these two results, some more notation has to be introduced.

	The feasible set  $F(\bmu)$ of the lower level problem is defined in \eqref{eq:feasible_set_lower_level_problem} and its \emph{optimal value mapping} is defined as
	\begin{equation} \label{eq:definition_optimal_value_function}
		\optimalvaluefunction: \U \rightarrow \R, \quad \bmu \mapsto \min_{\bmx\in F(\bmu)} \objectivefunctioninx(\bmx).
	\end{equation}
	The following lemmas yield some auxiliary results that are used to prove convergence for Algorithm \ref{alg:3_stage_algo}. The first one of these results is the continuity of the optimal value function $\optimalvaluefunction$ of the lower level problem of \ref{opt_problem:minmaxregret_lower_level_without_decision_rule}.
	\begin{lemma} \label{lem:f*_cont}
		For $\setforx \subset \R^\dimx$, let $\objectivefunctioninx\! :\!\setforx \! \rightarrow \! \R$ be continuous and convex, and  
		${F(\bmu) \neq \emptyset}$ for all $\bmu\in \U$.
		Then, the optimal value function $\optimalvaluefunction(\bmu)$ of the lower level problem \ref{opt_problem:subproblem_min_cost} is continuous on $\U$.
	\end{lemma}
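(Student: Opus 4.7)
The plan is to view $\optimalvaluefunction$ as the marginal (optimal value) function of the parametric optimization problem $\min_{\bmx \in F(\bmu)} \objectivefunctioninx(\bmx)$ and invoke a Berge-type maximum theorem, in the spirit of the parametric optimization framework of Bank et al.\ already cited in the paper. Concretely, it suffices to verify that (i) $F$ has non-empty compact values, (ii) $F$ is upper hemicontinuous, and (iii) $F$ is lower hemicontinuous; combined with continuity of $\objectivefunctioninx$ this yields continuity of $\optimalvaluefunction$ on $\U$. Non-emptiness is given by hypothesis; compactness of $F(\bmu) \subseteq \setforx$ follows because $\setforx$ is compact and the inequalities $(A\bmx - \bmb(\bmu))_i \le 0$ define a closed subset, since $\bmb$ is linear (hence continuous) in $\bmu$.

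I would then prove the two semicontinuity statements directly in terms of sequences. For \emph{lower semicontinuity} of $\optimalvaluefunction$, pick $\bmu_k \to \bmu_0$ in $\U$ and select minimizers $\bmx_k \in F(\bmu_k)$ (which exist by compactness of $F(\bmu_k)$ and continuity of $\objectivefunctioninx$). By compactness of $\setforx$, a subsequence $\bmx_{k_j}$ converges to some $\bmx^\ast$. The joint continuity of $(\bmx,\bmu) \mapsto A\bmx - \bmb(\bmu)$ transfers feasibility to the limit, i.e., $\bmx^\ast \in F(\bmu_0)$, so
\begin{equation*}
\optimalvaluefunction(\bmu_0) \le \objectivefunctioninx(\bmx^\ast) = \lim_{j\to\infty} \objectivefunctioninx(\bmx_{k_j}) = \liminf_{k \to \infty} \optimalvaluefunction(\bmu_k),
\end{equation*}
which is the closed-graph / upper-hemicontinuity part and gives lower semicontinuity of $\optimalvaluefunction$.

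For \emph{upper semicontinuity} of $\optimalvaluefunction$, the task is to show that for every $\bmx_0 \in F(\bmu_0)$ there exist $\bmx_k \in F(\bmu_k)$ with $\bmx_k \to \bmx_0$; continuity of $\objectivefunctioninx$ then yields $\limsup_k \optimalvaluefunction(\bmu_k) \le \limsup_k \objectivefunctioninx(\bmx_k) = \objectivefunctioninx(\bmx_0)$, and taking $\bmx_0$ to be a minimizer of $\optimalvaluefunction(\bmu_0)$ closes the argument. I expect this to be the main obstacle, since it is the lower hemicontinuity of $F$ and does not follow from closedness of the graph alone. The natural tool is Hoffman's lemma / Robinson's Lipschitz property for polyhedral multifunctions: the parametric linear system $A\bmx \le \bmb(\bmu)$ with $\bmb$ affine in $\bmu$ makes $\bmu \mapsto \{\bmx \in \R^{\dimx} : A\bmx \le \bmb(\bmu)\}$ Lipschitz continuous in the Hausdorff metric on the region where it is non-empty, so one obtains feasible approximations $\tilde{\bmx}_k$ with $\|\tilde{\bmx}_k - \bmx_0\| \le L \|\bmb(\bmu_k) - \bmb(\bmu_0)\|$. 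A projection / truncation step is needed to ensure $\tilde{\bmx}_k \in \setforx$ as well, using that $\bmx_0$ lies in the compact set $\setforx$ and the perturbations vanish; this can be arranged by convex-combination with $\bmx_0$ if $\setforx$ is convex, or by invoking the non-emptiness assumption $F(\bmu_k) \ne \emptyset$ and compactness to select a feasible $\bmx_k$ close to $\tilde{\bmx}_k$. With $\bmx_k \to \bmx_0$ established, continuity of $\objectivefunctioninx$ delivers upper semicontinuity and thereby the claim.
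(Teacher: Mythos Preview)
Your approach is correct in outline but takes a genuinely different route from the paper. The paper's proof is a two-line citation: it observes that the constraint functions $g_i(\bmx) = (A\bmx - \bmb(\bmu))_i$ are linear, hence convex and weakly analytic, and then invokes Theorem~4.3.5 of Bank et al.\ directly to conclude continuity of $\optimalvaluefunction$. That result packages exactly the parametric-convex machinery you allude to, so the paper outsources all of the hemicontinuity work. Your proposal instead unpacks a Berge-type argument by hand, proving upper and lower hemicontinuity of $F$ explicitly and appealing to Hoffman's lemma for the lower-hemicontinuity direction. This is more self-contained and, notably, never uses the convexity of $\objectivefunctioninx$ (only its continuity), so in that respect your argument is slightly more general than what the lemma statement demands. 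The trade-off is length and the one soft spot you yourself flag: pulling the Hoffman-perturbed point $\tilde{\bmx}_k$ back into the abstract compact set $\setforx$ is not automatic without some structural assumption on $\setforx$ (convexity, or that $\setforx$ is itself cut out by finitely many linear or convex inequalities so that it can be absorbed into the Hoffman estimate). The paper's cited theorem handles this because its hypotheses are phrased for the full constraint system; if you want your direct argument to go through cleanly, you should either assume $\setforx$ is polyhedral/convex or fold its description into the linear system before applying Hoffman's bound.
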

	
	\begin{proof}
		The constraints
		\begin{equation}
			g_i(\bmx) \coloneqq (A\bmx - \bmb(\bmu))_i, \quad i=1,..., \numberconstraints,
		\end{equation}
		are linear functions and hence convex and weakly analytic. Therefore, the prerequisites of Theorem~4.3.5 from \cite{Bank.1982} are fulfilled. This theorem then provides the continuity of  $\optimalvaluefunction$ on $\U$.
	\end{proof}
	
	In Algorithm \ref{alg:3_stage_algo}, the first and second stage are repeated until the feasiblity criterion is fulfilled. 
	The next lemma guarantees that this happens after a finite number of iterations. 
	
	\begin{lemma} \label{lem:feasibility_interval}
		Let $\U \subset \R^\dimU$ be interval-shaped, closed and bounded. Then, Algorithm~\ref{alg:3_stage_algo} always enters the \nth{3} stage after a finite number of iterations in the first two stages. 
	\end{lemma}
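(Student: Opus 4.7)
The plan is to exploit two structural features that together force termination of the feasibility loop after finitely many iterations: (i) the max-infeasibility objective is convex in $\bmu$ when $\bmpi$ is held fixed, and (ii) the uncertainty set $\U$ is a box, hence has only $2^{\dimU}$ extreme points.

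First I would verify that, for every fixed $\bmpi \in \setforpi$, the map
$\bmu \mapsto \max_{1\leq j \leq \numberconstraints} \bigl(\Aforpi(\bmu)\bmpi - \bmb(\bmu)\bigr)_j$
is convex on $\U$. The problem setup assumes that the entries of $\Aforpi(\bmu)$ and $\bmb(\bmu)$ depend linearly on $\bmu$, so each individual term is affine in $\bmu$; the pointwise maximum of finitely many affine functions is convex. By Bauer's maximum principle, a convex function on a polytope attains its maximum at an extreme point. Since $\U$ is a box, its extreme point set $\extremalpointsofU$ has cardinality $2^{\dimU}$, and the argmax of $P_{\text{Max-Infeasibility}}(\bmpi^k, \U)$ can always be chosen in $\extremalpointsofU$.

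Next I would argue that whenever the feasibility criterion fails, the newly added scenario $\bmu^k$ is a fresh extreme point not already contained in $\dot{\U}^k$. By construction, $\bmpi^k$ satisfies $\Aforpi(\bmu)\bmpi^k \leq \bmb(\bmu)$ for every $\bmu \in \dot{\U}^k$, i.e.\ $\max_j \bigl(\Aforpi(\bmu)\bmpi^k - \bmb(\bmu)\bigr)_j \leq 0$ on the current discretization. A violation point $\bmu^k$ with $\max_j \bigl(\Aforpi(\bmu^k)\bmpi^k - \bmb(\bmu^k)\bigr)_j > 0$ can therefore not already lie in $\dot{\U}^k$, so each failed iteration strictly enlarges the set of extreme points contained in the discretization.

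Since $|\extremalpointsofU|=2^{\dimU}<\infty$, after at most $2^{\dimU}$ iterations the discretization $\dot{\U}^k$ contains all extreme points of $\U$, and the $\bmpi^k$ produced at that iteration is feasible at every extreme point. By the convexity established in the first step, the maximum of the violation over the whole box $\U$ is attained at an extreme point and is therefore $\leq 0$, so the feasibility criterion \eqref{eq:feasibility_criterion} is fulfilled and the algorithm enters the 3rd stage. The one subtlety to state carefully is that an optimizer of $P_{\text{Max-Infeasibility}}$ may be chosen at an extreme point even when non-extreme maximizers tie; this is exactly the content of Bauer's principle for convex maximization over a polytope and requires no additional assumption beyond linear dependence on $\bmu$.
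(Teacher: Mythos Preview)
Your proposal is correct and follows essentially the same approach as the paper: both arguments observe that the max-infeasibility problem attains its optimum at a vertex of the box $\U$, so at most $2^{\dimU}$ infeasibility iterations can occur before all vertices are in the discretization and the feasibility criterion is met. Your version is in fact more carefully written---you invoke Bauer's principle for convex maximization (the pointwise max of affine functions) and explicitly verify that a violating $\bmu^k$ cannot already lie in $\dot{\U}^k$---whereas the paper simply calls the subproblem ``linear'' and leaves these details implicit.
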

	This result holds, since the max-infeasibility problem \ref{opt_prob:subproblem_max_infeasibility} is a linear optimization problem, and therefore the solution is an vertex of the intervall-shaped uncertainty set $\U$ \cite{Hochstattler.2017}. There are only finitely many vertices, which is the reason that the feasibility criterion is fulfilled, at last, after all vertices of $\U$ are contained in the discretization $\dot\U$.
	
	The next result uses compactness of the uncertainty set $\U$ to guarantee that for every ${\delta >0}$ there exists an iteration $k$ after which the distance of the iterate $\bmu^k$ to the discretized uncertainty set $\dot{\U^k}$ is smaller than this value $\delta$.
	
	\begin{lemma} \label{lemma:discretization_gets_finer}
	Let $\U\subset \R^\dimU$ be nonempty compact and $\dot{\U}^1\subset \U$ be a nonempty discretization of $\U$. 
	Consider a sequence $\{\bmu^{k}\}_{k\in \N}\subset \U$  and discretizations ${\dot{\U}^{k+1} = \dot{\U}^{k} \cup \{\bmu^k\}},\ k\in \N$, of $\U$ generated by Algorithm \ref{alg:3_stage_algo}. 
	Then the following holds:
	\begin{equation}
		\forall \delta > 0 \ \exists k \in \N: \ \exists \bmu \in \dot{\U}^{k}: \|\bmu^{k} - \bmu\| < \delta.
	\end{equation}
	\end{lemma}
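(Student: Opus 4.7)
The plan is to proceed by contradiction, exploiting sequential compactness of $\U \subset \R^{\dimU}$. Suppose the claim fails, i.e., there exists $\delta > 0$ such that for every $k \in \N$ and every $\bmu \in \dot{\U}^k$ one has $\|\bmu^k - \bmu\| \geq \delta$. The aim is to deduce that the sequence $\{\bmu^k\}_{k\in\N}$ has no Cauchy subsequence, which contradicts the Bolzano--Weierstrass theorem applied to $\U$.

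The key observation linking the two is the nested structure of the discretizations. Since $\dot{\U}^{k+1} = \dot{\U}^k \cup \{\bmu^k\}$, a straightforward induction gives $\bmu^j \in \dot{\U}^k$ whenever $j < k$. Plugging this into the standing assumption yields $\|\bmu^k - \bmu^j\| \geq \delta$ for all $j < k$, so the sequence $\{\bmu^k\}$ is $\delta$-separated. I would state this pairwise separation as the main intermediate step.

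Next, I would note that a $\delta$-separated sequence cannot contain a Cauchy subsequence: any two distinct terms are at distance at least $\delta$, so for any subsequence $\{\bmu^{k_\ell}\}_{\ell \in \N}$ one has $\|\bmu^{k_\ell} - \bmu^{k_{\ell'}}\| \geq \delta$ whenever $\ell \neq \ell'$, violating the Cauchy property. On the other hand, since $\U$ is compact (hence sequentially compact in $\R^{\dimU}$), the sequence $\{\bmu^k\} \subset \U$ must admit a convergent, and therefore Cauchy, subsequence. This contradiction establishes the lemma.

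I do not anticipate a real obstacle here; the argument is a textbook contradiction using sequential compactness. The only point requiring care is the bookkeeping that $\bmu^j \in \dot{\U}^k$ for $j<k$, which follows immediately from the update rule $\dot{\U}^{k+1} = \dot{\U}^k \cup \{\bmu^k\}$ and the fact that $\dot{\U}^1 \neq \emptyset$ plays no role in the separation argument itself.
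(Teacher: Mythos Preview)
Your argument is correct: negating the claim yields a $\delta$-separated sequence in the compact set $\U$, which contradicts Bolzano--Weierstrass. The bookkeeping $\bmu^j \in \dot{\U}^k$ for $j<k$ is exactly what is needed, and you have identified it.

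The paper takes a different route. Rather than sequential compactness, it uses total boundedness: cover $\U$ by finitely many open balls of radius $\delta/2$, say $I(\delta)$ of them. If $\|\bmu^k - \bmu\| \geq \delta$ for all $\bmu \in \dot{\U}^k$, then the ball of radius $\delta/2$ containing $\bmu^k$ is disjoint from $\dot{\U}^k$; since each iteration fills one previously empty ball, after at most $I(\delta)-1$ steps no empty ball remains and the claim is forced. The payoff of the paper's argument is the explicit iteration bound $I(\delta)-1$, which is later invoked in the proof of the error-estimate theorem (the one assuming a solution of order $p$) to bound how many iterations are needed before $\|\bmpi^k - \bmpi^*\| \leq \sqrt[p]{\delta}$. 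Your contradiction argument is cleaner and arguably more natural, but it does not by itself deliver that quantitative bound; if you want to recover it, you would observe that a $\delta$-separated set in $\U$ can have at most $I(\delta)$ elements, which amounts to the same counting.
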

	
	\begin{proof}
	Since $\U \subset \R^\dimU$ is compact, every covering of $\U$ has a finite subcover. Denote by $B_r(\bmu) = \{\bmx \in \R^\dimU | \|\bmu-\bmx\| < r \}$ the open ball around $\bmu$ with radius $r$. Then, for $\delta >0$, the covering 
	\begin{equation}
		\U \subseteq \bigcup_{\bmu \in \U} B_{\nicefrac{\delta}{2}}(\bmu)
	\end{equation}
	has a finite subcover 
	\begin{equation}
		\U \subseteq \bigcup_{\bmv \in \mathcal{V}} B_{\nicefrac{\delta}{2}}(\bmv) \subseteq \bigcup_{\bmu \in \U} B_{\nicefrac{\delta}{2}}(\bmu)
	\end{equation}
	with the finite subset $\mathcal{V} \subset \U$, $|\mathcal{V}| =: I(\delta) < \infty$. 
	
	Assume that for some $\delta >0$ and $k\in \N$, it holds that $\|\bmu^k - \bmu\| \geq \delta$ for all $\bmu \in \dot{\U}^k$. This implies that there exists an $\tilde{\bmv} \in \mathcal{V}$ with 
	\begin{equation}
		\bmu^k\in B_{\nicefrac{\delta}{2}}(\tilde{v}) \quad \text{and} \quad B_{\nicefrac{\delta}{2}}(\tilde{\bmv}) \cap \dot{\U}^k = \emptyset.  
	\end{equation}
	There can be at most $I(\delta)-1$ many such balls without any element of the discretized set. 
	If the same holds for the next discretization $\dot{\U}^{k+1} = \dot{\U}^k \cup \{\bmu^k\}$ and $\bmu^{k+1}$, i.e., 
	${\| \bmu^{k+1} -\bmu \|\geq\delta} \  \forall \bmu \in \dot{\U}^{k+1}$, the same argument as before is used, but now there can be at most $I(\delta)-2$ many such $\frac{\delta}{2}$-balls without any element of the discretized set $\dot{\U}^{k+1}$. This argumentation can be repeated until, after $I(\delta)-1$ iterations, there can be no such $\frac{\delta}{2}$-ball in which no element of the discretized set $\dot{\U}^{k+ I(\delta)-1}$ lies. Hence, there exists an integer $\tilde{k} = k+I(\delta)-1$ for which $\| \bmu^{\tilde{k}} - \bmu\| < \delta$ for some $\bmu \in \dot{\U}^{\tilde{k}}$ is true. 
	\end{proof}

	For the estimate in the next lemma, 
    the modulus of continuity \cite{Suhubi.2003}, which is defined for uniformly continuous functions, is needed. First note, that the function $\objectivefunctioninpi$ is continuous in $\bmpi$ and $\bmu$ because $\objectivefunctioninx$ is continuous in $\bmx$, and the affine linear decision rule  
	\eqref{eq:decision_rule} is continuous in $\bmu$. From Lemma \ref{lem:f*_cont}, it is known that $\optimalvaluefunction$ is continuous on $\U$. And, as continuous functions over a compact set, these functions are also uniformly continuous. Hence, the moduli of continuity exist and are defined as follows:
	\begin{subequations}
	\begin{align}
		\omega(f(\pi, \cdot), \delta) &= \sup_{u,v \in \U} \{|\objectivefunctioninpi(\pi, u) - \objectivefunctioninpi(\pi, v)| \ | \  \|u-v\| \leq \delta \}, \\
		\omega(\optimalvaluefunction(\cdot), \delta) &= \sup_{u,v\in \U} \{|\optimalvaluefunction(u) - \optimalvaluefunction(v)| \  | \  \|u-v \| \leq \delta \}, \\
		\omega(r(\pi, \cdot), \delta) &= \sup_{u,v \in \U} \{|r(\pi, u) - r(\pi, v)| \ | \ \|u-v\| \leq \delta\}. \label{eq:omega_for_regret}
	\end{align}
	\end{subequations}
	The next result relates the distance of the maximal regret of the current iterate over the discretized uncertainty set to its maximal regret over the whole uncertainty set to the parameter $\delta > 0$ that measures the largest possible distance of new discretization candidates from the current discretized uncertainty set.
	\begin{lemma} \label{lemma:distance_bounds_regret}
	Let $\U \subseteq \R^{\dimU}$ be nonempty and compact and let the sequence ${\{\bmu^{k_l}\} \subset \U}$ be generated by solving the max-regret problem $P_{\text{Max-Regret}}(\bmpi^{k_l}, \U)$ in Algorithm \ref{alg:3_stage_algo}. Further, let $\optimalregretvalueinpi^{k_l}$ be the optimal objective value of $P_{\text{Min-Max-Regret}}(\dot{\U}^{k_l})$. Then, it holds
	\begin{equation}
		0\leq \regretfunctioninpi(\bmpi^{k_l}, \bmu^{k_l}) -  \optimalregretvalueinpi^{k_l} \leq \omega(\objectivefunctioninpi(\bmpi^{k_l}, \cdot), \delta) + \omega(\optimalvaluefunction(\cdot), \delta)
	\end{equation}
	\end{lemma}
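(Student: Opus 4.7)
The lower bound is immediate from the inclusion $\dot{\U}^{k_l} \subseteq \U$: since $\bmu^{k_l}$ maximizes $r(\bmpi^{k_l},\cdot)$ over $\U$ while $r^{k_l} = \max_{\bmu \in \dot{\U}^{k_l}} r(\bmpi^{k_l},\bmu)$ is a maximum over a subset, we have $r(\bmpi^{k_l},\bmu^{k_l}) \geq r^{k_l}$, so the difference is non-negative.

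For the upper bound, the key idea is to compare $r(\bmpi^{k_l},\bmu^{k_l})$ with $r(\bmpi^{k_l},\bar{\bmu})$ for a nearby discretization point $\bar{\bmu} \in \dot{\U}^{k_l}$. By Lemma \ref{lemma:discretization_gets_finer}, for the given $\delta > 0$ we may (by taking $k_l$ large enough along the subsequence) pick $\bar{\bmu}\in\dot{\U}^{k_l}$ with $\|\bmu^{k_l}-\bar{\bmu}\|\leq \delta$. Since $r^{k_l}\geq r(\bmpi^{k_l},\bar{\bmu})$, I would estimate
\begin{equation}
  r(\bmpi^{k_l},\bmu^{k_l}) - r^{k_l} \;\leq\; r(\bmpi^{k_l},\bmu^{k_l}) - r(\bmpi^{k_l},\bar{\bmu}).
\end{equation}

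Next, I would expand $r(\bmpi,\bmu) = \objectivefunctioninpi(\bmpi,\bmu) - \optimalvaluefunction(\bmu)$ using the definition of the regret together with the optimal value mapping \eqref{eq:definition_optimal_value_function}, so that
\begin{equation}
  r(\bmpi^{k_l},\bmu^{k_l}) - r(\bmpi^{k_l},\bar{\bmu})
  = \bigl(\objectivefunctioninpi(\bmpi^{k_l},\bmu^{k_l}) - \objectivefunctioninpi(\bmpi^{k_l},\bar{\bmu})\bigr)
  - \bigl(\optimalvaluefunction(\bmu^{k_l}) - \optimalvaluefunction(\bar{\bmu})\bigr).
\end{equation}
Applying the triangle inequality and bounding each term by the corresponding modulus of continuity (valid because $\|\bmu^{k_l}-\bar{\bmu}\|\leq \delta$ and $\objectivefunctioninpi(\bmpi^{k_l},\cdot)$ and $\optimalvaluefunction$ are uniformly continuous on the compact set $\U$, the latter by Lemma \ref{lem:f*_cont}) yields the claimed bound
\begin{equation}
  r(\bmpi^{k_l},\bmu^{k_l}) - r^{k_l} \;\leq\; \omega(\objectivefunctioninpi(\bmpi^{k_l},\cdot),\delta) + \omega(\optimalvaluefunction(\cdot),\delta).
\end{equation}

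The main conceptual step is the use of Lemma \ref{lemma:discretization_gets_finer} to guarantee existence of a discretization point $\bar{\bmu}$ within distance $\delta$ of $\bmu^{k_l}$; everything else is a straightforward split of the regret into its two continuous ingredients and an application of uniform continuity. No real obstacle is expected, provided one is careful that $\delta$ in the statement is understood as the distance of $\bmu^{k_l}$ to the current discretization (which Lemma \ref{lemma:discretization_gets_finer} controls along the subsequence).
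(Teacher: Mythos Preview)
Your proposal is correct and follows essentially the same route as the paper: the lower bound comes from $\dot{\U}^{k_l}\subseteq\U$, and the upper bound is obtained by invoking Lemma~\ref{lemma:discretization_gets_finer} to pick a nearby discretization point, splitting the regret into $\objectivefunctioninpi-\optimalvaluefunction$, and bounding the two resulting differences by the respective moduli of continuity. The only cosmetic difference is that the paper adds and subtracts terms inside an absolute value and identifies the middle term as $r(\bmpi^{k_l},\bar{\bmu})\leq r^{k_l}$, whereas you first subtract $r^{k_l}$ and then bound $r(\bmpi^{k_l},\bmu^{k_l})-r(\bmpi^{k_l},\bar{\bmu})$ directly; the ingredients and logic are identical.
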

	
	\begin{proof}
	From Lemma \ref{lem:f*_cont}, it is known that $\optimalvaluefunction(\bmu)$ is continuous on $\U$. 
	And $\objectivefunctioninpi(\bmpi, \bmu)$ is continuous on $\U$, because $\objectivefunctioninx$ is continuous in $\bmx$ and the decision rule \eqref{eq:decision_rule} is continuous in $\bmu$. 
	Therefore, it holds that
	\begin{subequations} \label{eq:omega_to_zero}
		\begin{align}
			\omega(\objectivefunctioninpi(\bmpi, \cdot), \delta) &\rightarrow 0 \text{ as } \delta \rightarrow 0 \\
			\omega (\optimalvaluefunction(\cdot), \delta) &\rightarrow 0 \text{ as } \delta  \rightarrow 0.
		\end{align}
	\end{subequations}
	
	For the given solution $\bmpi^{k_l}$ of the discretized problem $P_{\text{Min-Max-Regret}}(\dot{\U}^{k_l})$, the scenarios $\bmu^{k_l}$ maximize the regret.
	Therefore, it holds that  
	\begin{equation} \label{eq:regret_difference_positive}
		\regretfunctioninpi(\bmpi^{k_l}, \bmu^{k_l}) = \max_{\bmu\in \U} \regretfunctioninpi(\bmpi^{k_l}, \bmu ) \geq \max_{\bmu\in \dot{\U}^{k_l}} \regretfunctioninpi(\bmpi^{k_l}, \bmu ) 
		= \optimalregretvalueinpi^{k_l}.
	\end{equation}

	For $\delta>0$ Lemma \ref{lemma:discretization_gets_finer}, states the existence of $l\in \N$ such that there is an $\bmu \in \dot{\U}^{k_l}$ with $\|\bmu^{k_l} - \bmu\| < \delta$. For this $\bmu\in \dot{\U}^{k_l}$ it holds that
	\begin{equation}
		\regretfunctioninpi(\bmpi^{k_l}, \bmu) \leq \optimalregretvalueinpi^{k_l} = \max_{\bmu\in \dot{\U}^{k_l}} \objectivefunctioninpi(\bmpi^{k_l},\bmu).
		\label{eq:proof_algo_terminates_regret_estimate}
	\end{equation} 
	For the optimal value of the lower level problem, $\optimalvaluefunction(\bmu^{k_l})$, it holds that
	\begin{equation}
		\optimalvaluefunction(\bmu^{k_l}) \leq \objectivefunctioninpi(\bmpi, \bmu^{k_l}) \quad \forall \bmpi \in F(\bmu^{k_l})
	\end{equation}	
	and the regret $r(\bmpi^{k_l})$ is always non-negative. Therefore, it holds that
    \begin{equation}
	\begin{aligned}
		\regretfunctioninpi(\bmpi^{k_l}, \bmu^{k_l}) 
		& = \left|\objectivefunctioninpi(\bmpi^{k_l}, \bmu^{k_l}) - \optimalvaluefunction(\bmu^{k_l})\right| \\
		&= \left| \objectivefunctioninpi(\bmpi^{k_l}, \bmu^{k_l}) - \objectivefunctioninpi(\bmpi^{k_l}, \bmu) + \objectivefunctioninpi(\bmpi^{k_l}, \bmu) - \optimalvaluefunction(\bmu) + \optimalvaluefunction(\bmu) - \optimalvaluefunction(\bmu^{k_l}) \right|  \\
		& \leq  \left| \objectivefunctioninpi(\bmpi^{k_l}, \bmu^{k_l}) - \objectivefunctioninpi(\bmpi^{k_l}, \bmu) \right|  + \left| \objectivefunctioninpi(\bmpi^{k_l}, \bmu) - \optimalvaluefunction(\bmu) \right| \\
        & \quad + \left| \optimalvaluefunction(\bmu)-\optimalvaluefunction(\bmu^{k_l}) \right|. \label{eq:proof_distance_bounds_regret_sum_of_absolute_values}
	\end{aligned}
    \end{equation}
	The last three terms in \eqref{eq:proof_distance_bounds_regret_sum_of_absolute_values} can be further estimated as follows:	
	Since  $\|\bmu^{k_l} - \bmu\| < \delta$, for the first and third summand it holds that
	\begin{align}
		\left| \objectivefunctioninpi(\bmpi^{k_l}, \bmu^{k_l}) - \objectivefunctioninpi(\bmpi^{k_l},\bmu) \right| &\leq \omega(\objectivefunctioninpi(\bmpi, \cdot), \delta),
    \end{align}
    and 
    \begin{align}
		\left| \optimalvaluefunction(\bmu)-\optimalvaluefunction(\bmu^{k_l}) \right| &\leq \omega(\optimalvaluefunction(\cdot), \delta).
	\end{align}
	For the second summand, the absolute value can be omitted and it follows by definition of the regret and \eqref{eq:proof_algo_terminates_regret_estimate}
	
	\begin{equation}
		\left| \objectivefunctioninpi(\bmpi^{k_l}, \bmu) - \optimalvaluefunction(\bmu) \right| =\objectivefunctioninpi(\bmpi^{k_l}, \bmu) - \optimalvaluefunction(\bmu) = \regretfunctioninpi(\bmpi^{k_l}, \bmu) \leq \optimalregretvalueinpi^{k_l}.
	\end{equation}
	Putting everything together, it follows that
	\begin{equation}
		\regretfunctioninpi(\bmpi^{k_l}, \bmu^{k_l}) \leq \omega(\objectivefunctioninpi(\bmpi^{k_l}, \cdot), \delta) + \optimalregretvalueinpi^{k_l} + \omega(\optimalvaluefunction(\cdot), \delta).
		\label{eq:proof_distance_bounds_regret_estimate_2}
	\end{equation}
	From \eqref{eq:regret_difference_positive} and \eqref{eq:proof_distance_bounds_regret_estimate_2} it is obtained that
	\begin{equation}
		0 \leq \regretfunctioninpi(\bmpi^{k_l}, \bmu^{k_l}) - \optimalregretvalueinpi^{k_l} \leq  \omega(\objectivefunctioninpi(\bmpi^{k_l}, \cdot), \delta)+ \omega(\optimalvaluefunction(\cdot), \delta).
		\label{eq:proof_distance_bounds_regret_estimate_3}
	\end{equation}
	\end{proof}
	
	Finally, with the results obtained so far, the global convergence of Algorithm \ref{alg:3_stage_algo} can be shown.
	
	\begin{theorem} \label{thm:convergence_3_stage_algo}
	Let $\setforx \subset \R^\dimx$ be closed and bounded and let $\objectivefunctioninx: \setforx \rightarrow \R$ be continuous and convex. Assume that the uncertainty set $\U \in \R^\dimU$ is closed, bounded and interval-shaped, and that it fulfills $F(\bmu) \neq \emptyset$ for all $\bmu\in \U$. 
	Further, let $\{\bmpi^k\}_{k\in \mathbb{N}}\subseteq \setforpi$ be a sequence generated by Algorithm \ref{alg:3_stage_algo}. Then $\{\bmpi^k\}_{k\in \mathbb{N}}$ has an accumulation point $\hat{\bmpi} \in \setforpi$ and every accumulation point $\hat{\bmpi}$ of $\{\bmpi^k\}_{k\in \mathbb{N}}$ 
	is a global solution of \ref{opt_problem:minmaxregret_lower_level_without_decision_rule}.
	\end{theorem}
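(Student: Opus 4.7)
My plan is to pass to the limit along a suitable subsequence of iterates, combining compactness of the parameter set $\setforpi$, continuity of the data, Lemma \ref{lem:f*_cont}, and the sandwich provided by Lemma \ref{lemma:distance_bounds_regret}. First, since $\setforpi$ is compact, Bolzano–Weierstrass immediately yields an accumulation point $\hat{\bmpi}\in \setforpi$. Lemma \ref{lem:feasibility_interval} guarantees that between any two visits of Stage 3 only finitely many Stage 1–2 iterations occur, so Stage 3 is entered infinitely often; I restrict attention to the subsequence of indices $k_l$ at which Stage 3 runs, and, after a further extraction, to a sub-subsequence with $\bmpi^{k_l} \to \hat{\bmpi}$. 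For every such $k_l$, the feasibility criterion \eqref{eq:feasibility_criterion} holds and $\bmu^{k_l}$ is a maximizer of $P_{\text{Max-Regret}}(\bmpi^{k_l},\U)$.

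Next, I would establish feasibility of $\hat{\bmpi}$. For an arbitrary fixed $\bmu \in \U$, the feasibility criterion gives $(\Aforpi(\bmu)\bmpi^{k_l} - \bmb(\bmu))_j \leq 0$ for every $j$ and every $l$; continuity of $\bmpi \mapsto \Aforpi(\bmu)\bmpi - \bmb(\bmu)$ allows to pass to the limit and obtain $\Aforpi(\bmu)\hat{\bmpi} \leq \bmb(\bmu)$ for all $\bmu \in \U$, i.e.\ $\hat{\bmpi}$ satisfies the semi-infinite constraint of \ref{opt_problem:minmaxregret_lower_level_without_decision_rule}.

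Then I would handle global optimality. Let $\bmpi^\star$ be any global solution of \ref{opt_problem:minmaxregret_lower_level_without_decision_rule} (existence follows from continuity of $\bmpi \mapsto \max_{\bmu \in \U} \regretfunctioninpi(\bmpi,\bmu)$ on the compact feasible set via Berge's maximum theorem) and set $r^\star = \max_{\bmu\in\U}\regretfunctioninpi(\bmpi^\star,\bmu)$. Since $\dot{\U}^{k_l} \subseteq \U$, the vector $\bmpi^\star$ is feasible for the relaxation \ref{opt_prob:subproblem_min_max_regret} with $\dot{\U}^{k_l}$, so $\regretfunctioninpi^{k_l} \leq \max_{\bmu\in\dot{\U}^{k_l}} \regretfunctioninpi(\bmpi^\star,\bmu) \leq r^\star$. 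Combining this relaxation bound with Lemma \ref{lemma:distance_bounds_regret} gives, for each $\delta>0$ and all sufficiently large $l$,
\[
\max_{\bmu\in\U}\regretfunctioninpi(\bmpi^{k_l},\bmu) \;=\; \regretfunctioninpi(\bmpi^{k_l},\bmu^{k_l}) \;\leq\; r^\star + \omega(\objectivefunctioninpi(\bmpi^{k_l},\cdot),\delta) + \omega(\optimalvaluefunction(\cdot),\delta).
\]
Letting $l\to\infty$ the left-hand side converges to $\max_{\bmu\in\U}\regretfunctioninpi(\hat{\bmpi},\bmu)$ by continuity, and then letting $\delta\to 0$ makes both moduli vanish. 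This yields $\max_{\bmu\in\U}\regretfunctioninpi(\hat{\bmpi},\bmu) \leq r^\star$, and combined with feasibility of $\hat{\bmpi}$ and optimality of $r^\star$ it proves $\hat{\bmpi}$ is a global solution.

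The main technical obstacle I expect is the double limit $l\to\infty,\ \delta\to 0$ in the last display. Lemma \ref{lemma:distance_bounds_regret} ties $\delta$ and $l$ together through Lemma \ref{lemma:discretization_gets_finer}, so to decouple them cleanly I would invoke uniform continuity of $\objectivefunctioninpi$ on the compact product $\setforpi\times\U$ and dominate $\omega(\objectivefunctioninpi(\bmpi^{k_l},\cdot),\delta)$ by the joint modulus $\omega(\objectivefunctioninpi,\delta)$, which is independent of $l$ and tends to zero as $\delta\to 0$; the term $\omega(\optimalvaluefunction(\cdot),\delta)$ is handled analogously via Lemma \ref{lem:f*_cont}. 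Once this uniformity issue is settled, the remainder of the argument is a standard cutting-plane/relaxation convergence sandwich.
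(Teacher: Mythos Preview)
Your argument follows the same skeleton as the paper's proof: compactness of $\setforpi$ for accumulation points, the relaxation bound $\regretfunctioninpi^{k_l}\le r^\star$, and the sandwich from Lemma~\ref{lemma:distance_bounds_regret} to close the gap. The main difference is in the feasibility step. The paper works with the full sequence and uses an index-shift argument (passing from $\bmpi^{k_{l+1}}$ to $\bmu^{k_l}$) to show $g_j(\hat\bmpi,\bmu)\le 0$; you instead restrict at the outset to Stage~3 indices, where each $\bmpi^{k_l}$ is already feasible for all of $\U$, and simply let $l\to\infty$. Your route is shorter, but it needs one clarification you glossed over: to conclude that an \emph{arbitrary} accumulation point $\hat\bmpi$ of the full sequence is reached along the Stage~3 subsequence, it is not enough that ``between any two visits of Stage~3 only finitely many Stage~1--2 iterations occur''; you need the Stage~3 indices to be cofinite. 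This does hold here, because by the argument behind Lemma~\ref{lem:feasibility_interval} every failure of the feasibility criterion adds a new vertex of $\U$ to $\dot\U$, and once all $2^{\dimU}$ vertices are present the criterion is satisfied at every subsequent iteration. State that explicitly.

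Your optimality step is also organised a bit differently: the paper stops at the sandwich $\regretfunctioninpi^{k_l}\le r^\star\le \regretfunctioninpi(\bmpi^{k_l},\bmu^{k_l})$ and phrases the conclusion as $\varepsilon$-approximation (together with a termination argument you omit, which is fine since it is not in the theorem statement), whereas you pass to the limit directly in $\max_{\bmu}\regretfunctioninpi(\bmpi^{k_l},\bmu)$ via Berge. Your handling of the double limit via the joint modulus $\omega(\objectivefunctioninpi,\delta)$ on the compact product $\setforpi\times\U$ is the right fix and makes the bound uniform in $l$; this is cleaner than the paper, which leaves the dependence of $\omega(\objectivefunctioninpi(\bmpi^{k_l},\cdot),\delta)$ on $k_l$ implicit.
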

		
	\begin{proof}
	In this proof, the following steps are shown:
	\begin{enumerate}
		\item  $\{\bmpi^k\}_{k\in \mathbb{N}}$ has an accumulation point.
		\item Every accumulation point is feasible.
		\item The algorithm terminates after a finite number of iterations.
		\item When the termination criterion is met, say after $k$ iterations, then $\bmpi^k$ approximates a global solution of \ref{opt_problem:minmaxregret_lower_level_without_decision_rule}.
	\end{enumerate}

	First it is shown that the sequence  $\left\{\bmpi^k\right\}_{k\in \mathbb{N}} \subset \setforpi \subset \R^{\dimpi}$, generated by Algorithm~\ref{alg:3_stage_algo}, has an accumulation point. Since $\setforpi \subset \R^n$ is bounded and closed, every sequence in $\setforpi$ has a convergent subsequence. This implies that  $\{\bmpi^k\}_{k\in \mathbb{N}}$ has an accumulation point. 
	
	The next step is to show that every accumulation point is feasible for \ref{opt_problem:minmaxregret_lower_level_without_decision_rule}. Let $\hat{\bmpi}$ be an accumulation point of  $\{\bmpi^k\}_{k\in \mathbb{N}}$ and let $\{\bmu^k\}_{k\in \N}$ be generated by Algorithm~\ref{alg:3_stage_algo}. Define 
	\begin{equation}
		g_j(\bmpi, \bmu)~\coloneqq~(\Aforpi(\bmu) \bmpi - b(\bmu))_j 
	\end{equation}
	as the $j$-th constraint, $j=1,..., \numberconstraints$.
	For every $k' \geq k+1$ it holds that $ \bmu^k \in \dot{\U}^{k'} $ and therefore
	\begin{equation}
		g_j(\bmpi^{k'}, \bmu^k) \leq 0 \quad \forall \ 1 \leq j \leq \numberconstraints.
	\end{equation}	 
	Since $\mathcal{U}$ is compact, 
	we can choose convergent subsequences $\{\bmpi^{k_l}\}_{l\in \mathbb{N}}$ and $\{\bmu^{k_l}\}_{l\in \mathbb{N}}$ of sequences generated by Algorithm \ref{alg:3_stage_algo} with
	\begin{equation}
		\lim_{l\rightarrow \infty} \bmpi^{k_l} = \hat{\bmpi} \quad \text{and} \quad \lim_{l \rightarrow \infty} \bmu^{k_l} = \hat{\bmu}
	\end{equation}
	for some $\hat{\bmu} \in \U$.
	Let $\bmu \in \U$ be arbitrary. It follows that 
	\begin{align}
		\max_{j\in \{1,...,\numberconstraints\}} \left\{g_j(\hat{\bmpi}, \bmu)\right\} &= \max_{j\in \{1,...,\numberconstraints\}} \left\{ g_j \left(\lim_{l\rightarrow \infty} \bmpi^{k_l}, \bmu \right) \right\} \\
		&= \lim_{l\rightarrow \infty} \max_{j\in \{1,...,\numberconstraints\}} \left\{ g_j(\bmpi^{k_l}, \bmu) \right\}\\
		& \leq \lim_{l\rightarrow \infty} \max_{j\in \{1,...,\numberconstraints\}} \left\{ \max \left\{ g_j(\bmpi^{k_l}, \bmu^{k_l}), 0 \right\} \right\} \label{eq:proof_feasibility_accumulation_point_leq_estimate}\\
		&=  \max_{j\in \{1,...,\numberconstraints\}} \left\{ \max \left\{ g_j(\lim_{l\rightarrow \infty} \bmpi^{k_l}, \lim_{l\rightarrow \infty} \bmu^{k_l}), 0 \right\} \right\} \\
		&= \max_{j\in \{1,...,\numberconstraints\}} \left\{ \max \left\{ g_j(\lim_{l\rightarrow \infty} \bmpi^{k_{l+1}}, \lim_{l\rightarrow \infty} \bmu^{k_l}), 0 \right\} \right\} \label{eq:proof_feasibility_accumulation_point_index_shift}\\
		&= \lim_{l\rightarrow \infty} \max_{j\in \{1,...,\numberconstraints\}} \left\{ \max \left\{ g_j(\bmpi^{k_{l+1}}, \bmu^{k_l}), 0 \right\} \right\}  \leq 0 \label{eq:proof_feasibility_accumulation_point_higher_iteration_feasible}
	\end{align}
	The estimate in \eqref{eq:proof_feasibility_accumulation_point_leq_estimate} holds because there are two cases that can occur. Either $\bmu^{k_l}$ maximizes $\max_{\bmu\in \U} \ \max_{j\in \{1,...,\numberconstraints\}}g_j(\bmx^{k_l}, \bmu)$ in which case the estimate holds. Or, if $\bmu^{k_l}$ is generated by solving $P_{\text{Max-Regret}}(\bmpi^{k_l}, \U)$, the solution $\bmpi^{k_l}$ is feasible for every $\bmu\in \U$ in which case ${g_j(\bmpi^{k_l}, \bmu) \leq 0}$ holds for every $j=1,...,\numberconstraints$ and every $\bmu\in \U$. Then, in \eqref{eq:proof_feasibility_accumulation_point_index_shift}, an index-shift is made, and in the next step it is used that $\bmpi^{k_{l+1}}$ is feasible for $\bmu^{k_l}$ which implies $g_j(\bmpi^{k_{l+1}}, \bmu^{k_l}) \leq 0$ holds for every $j=1,...,\numberconstraints$.
	
	This proves that the accumulation point $\hat{\bmpi}$ fulfills the constraints of \ref{opt_problem:minmaxregret_lower_level_without_decision_rule}.
	
	In the third step of this proof, it is shown that the algorithm terminates. 
	Consider scenarios $\{\bmu^{k_l}\}\subset \U$ that are added in the third stage of Algorithm \ref{alg:3_stage_algo}, i.e., these are solutions of $P_{\text{Max-Regret}}(\bmpi^{k_l},\U)$. By Lemma \ref{lemma:distance_bounds_regret} it follows that
	\begin{equation}
		0\leq \regretfunctioninpi(\bmpi^{k_l}, \bmu^{k_l}) -  \optimalregretvalueinpi^{k_l} \leq \omega(\objectivefunctioninpi(\bmpi^{k_l}, \cdot), \delta) + \omega(\optimalvaluefunction(\cdot), \delta).
	\end{equation}
	For $l\rightarrow \infty$, it holds that $\delta \rightarrow 0$, and using \eqref{eq:omega_to_zero} and \eqref{eq:proof_distance_bounds_regret_estimate_3} one obtains
	\begin{equation}
		\regretfunctioninpi(\bmpi^{k_l},\bmu^{k_l}) - \optimalregretvalueinpi^{k_l} \rightarrow 0.
	\end{equation}
	Thus, the termination condition $ \regretfunctioninpi(\bmpi^{k_l},\bmv^{k_l}) - \optimalregretvalueinpi^{k_l} < \varepsilon$ is met after a finite number of iterations and the algorithm terminates.
	
	It remains to show that, once the termination criterion is met, say after the $k$-th iteration, $\bmpi^k$ approximates a global solution of \ref{opt_problem:minmaxregret_lower_level_without_decision_rule}.
	Denote the global 
	optimal objective value of \ref{opt_problem:minmaxregret_lower_level_without_decision_rule} by $\optimalregretvalueinpi^*$ and its feasible set by 
	\begin{equation}
		F^* \coloneqq \{\bmpi \in \setforpi | \Aforpi (\bmu)\bmpi \leq b(\bmu) \quad \forall \bmu\in \U\}.
	\end{equation} 
	Accordingly, denote the objective value of the discretized problem $P_{\text{Min Max Regret}}(\dot{\mathcal{U}}^k)$ by $\optimalregretvalueinpi^k$ and its feasible set by 
	\begin{equation}
		F^k \coloneqq \{\bmpi \in \setforpi | \Aforpi (u)\bmpi \leq b(u) \quad \forall u\in \dot{\U}^k \}.
	\end{equation} 
	With every iteration, the discretization of the uncertainty set gets larger and it holds
	\begin{equation}
		\dot{\U}^1 \subset ... \subset \dot{\U}^k \subset \dot{\U}^{k+1} \subset ... \subset \U.
	\end{equation}  Hence,
	\begin{equation}
		F^* \subseteq ... \subseteq F^{k+1} \subseteq F^k \subseteq ... \subseteq F^1
	\end{equation}
	and for the objective values it holds
	\begin{equation}
		\optimalregretvalueinpi^{1} \leq ... \leq \optimalregretvalueinpi^{k} \leq \optimalregretvalueinpi^{k+1} \leq ... \leq \optimalregretvalueinpi^\ast. 
		\label{eq:proof_algo_global_solution_optimal_regret_value_order}
	\end{equation}
	Let $\{\bmu^{k_l}\}_{l\in \N}$ be the subsequence of $\{\bmu^k\}_{k\in \N}$ that is generated by maximizing the regret for a given solution $\bmpi^{k_l}$ of the discretized problem, i.e., 
	\begin{equation}
		(\bmu^{k_l}, \bmx^*) = \underset{\bmu\in \U, \bmx^*(\bmu) \in \setforx}{\arg \max} \left\{\left.\regretfunctioninpi(\bmpi^{k_l}, \bmu) = \objectivefunctioninpi(\bmpi^{k_l}, \bmu) - \objectivefunctioninx(\bmx^*(\bmu))\right| A\bmx^*(\bmu)\leq \bmb(\bmu) \right\}
	\end{equation} (Line~\ref{ln:SolvePMaxReg} in Algorithm \ref{alg:3_stage_algo}). Such a sequence exists since, by Lemma \ref{lem:feasibility_interval}, after a finite number of iterations of the feasibility problem, its solution is feasible for every $\bmu\in \U$.
	It then holds that
	\begin{equation}
		\optimalregretvalueinpi^* = \min_{\bmpi \in \setforpi} \max_{\bmu\in \U} \regretfunctioninpi(\bmpi, \bmu) \leq \max_{\bmu\in \U} \regretfunctioninpi(\bmpi^{k_l},\bmu) = \regretfunctioninpi(\bmpi^{k_l}, \bmu^{k_l}).
	\end{equation}
	With this and \eqref{eq:proof_algo_global_solution_optimal_regret_value_order} it follows
	\begin{equation}
		\optimalregretvalueinpi^{k_l} \leq \optimalregretvalueinpi^* \leq \regretfunctioninpi(\bmpi^{k_l}, \bmu^{k_l}).
		\label{eq:estimate_regret_values}
	\end{equation}
	If, for some given $\varepsilon>0$, the $\varepsilon$-termination condition of the algorithm is met, i.e.,
	\begin{equation}
		\regretfunctioninpi(\bmpi^{k_l}, \bmu^{k_l}) - \optimalregretvalueinpi^{k_l} < \varepsilon,
	\end{equation}
	the difference of the global optimal objective value $\optimalregretvalueinpi^*$ to the objective value $\optimalregretvalueinpi^{k_l}$ of the solution $\bmpi^{k_l}$ of the discretized problem can be estimated by
	$$|\optimalregretvalueinpi^\ast - \optimalregretvalueinpi^{k_l}| = \optimalregretvalueinpi^\ast - \optimalregretvalueinpi^{k_l}  \leq \regretfunctioninpi(\bmpi^{k_l}, \bmu^{k_l}) - \optimalregretvalueinpi^{k_l} < \varepsilon$$
	
	By choosing $\varepsilon$ small enough it can be ensured that the algorithm stops arbitrarily close to a global solution of \ref{opt_problem:minmaxregret_lower_level_without_decision_rule}.
	\end{proof}
    
	\begin{remark}
	The sequence $\{\regretfunctioninpi^k\}_{k\in \N}$ generated by the first stage of Algorithm \ref{alg:3_stage_algo} (Line \ref{ln:SolvePMinMaxRegretDiscretized}) is a lower bound on the optimal objective value $\regretfunctioninpi^*$ of \ref{opt_problem:minmaxregret_lower_level_without_decision_rule} and is monotonically increasing. 
	
	The sequence $\{r(\bmpi^k, \bmu^k)\}_{k\in \N}$ with $\bmu^k$ from the third stage of Algorithm \ref{alg:3_stage_algo} (Line \ref{ln:SolvePMaxReg}) is an upper bound on the optimal objective value $\regretfunctioninpi^*$ of \ref{opt_problem:minmaxregret_lower_level_without_decision_rule}, but, in general, it is not monotone.  
	The best upper bound in iteration $k$ of $\regretfunctioninpi^*$ is thus given by \begin{equation}
		\min_{1\leq l \leq k} r(\bmpi^l, \bmu^l)
	\end{equation}
	with $\bmu^l$ being generated in the third stage of the algorithm. 
	\end{remark}
	
	In the following it is shown how the error of a current solution $\bmpi^k$ that fulfills the feasibility criterion can be estimated from the regret-difference calculated in the $\varepsilon$-termination criterion.
	For the following theorem, maximal regret values for a parameter $\bmpi$ over different uncertainty sets must be compared. Therefore, the following notation for the maximal regret over the whole uncertainty set $\U$, resp. a discretization $\dot{\U}^k$ of the uncertainty set, is introduced.
	
	Denote by $R(\bmpi)$ the optimal objective value of problem \ref{opt_prob:subproblem_max_regret} and by $\dot{R}^k(\bmpi)$ the optimal objective value of the following problem
	\begin{equation}
	\begin{aligned}
		\max_{u\in \dot{\U}^k, \bmx^*(\bmu) \in X} \quad & \objectivefunctioninpi(\bmpi, \bmu) - \objectivefunctioninx(\bmx^*(\bmu)) \\
		\text{s.t. } \quad & A\bmx^*(\bmu) \leq b(\bmu),
	\end{aligned}
	\end{equation}
	where only a discrete subset $\dot{\U}^k \subset \U$ is considered.
	
	\begin{definition}[{Solution of order p \cite{Still.2001}}]
	A point $\bmpi^* \in F$ is a solution of order $p=1,2$ of problem \ref{opt_problem:minmaxregret_lower_level_without_decision_rule}
	if there exists a constant $c >0$ and a neighborhood $V$ of $\bmpi^*$ such that 
	\begin{equation}
		c \| \bmpi - \bmpi^* \|^p \leq R(\bmpi) - R(\bmpi^*) 
		\label{eq:solution_of_order_p}
	\end{equation}
	holds for all $\bmpi \in V\cap F$. 
	\end{definition}
	
	\begin{theorem}
	Assume that $\bmpi^*$ is a global minimizer of \ref{opt_problem:minmaxregret_lower_level_without_decision_rule} of order $p \in \{1,2\}$. Then, the following holds
	\begin{enumerate}
		\item There exists a constant $c>0$ such that for every $\bmpi^k$ which fulfills the feasibility criterion, it holds that
		\begin{equation}
			\| \bmpi^k - \bmpi^*\| \leq \frac{1}{c} \sqrt[p]{\delta^k}
		\end{equation}
        with $\delta^k \coloneqq R(\bmpi^k) - \dot{R}^k(\bmpi^k)$.
		\item For a given $\delta>0$, there exists $k' \in \N$, such that the estimate
		\begin{equation}
			\| \bmpi^k - \bmpi^*\| \leq \sqrt[p]{\delta}
		\end{equation}
		is true for every $k \geq k'$.\\
	\end{enumerate}
	\end{theorem}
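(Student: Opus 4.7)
The overall plan is to combine the order-$p$ inequality $c\|\bmpi - \bmpi^*\|^p \leq R(\bmpi) - R(\bmpi^*)$ with a sandwich estimate of the form $R(\bmpi^k) - R(\bmpi^*) \leq \delta^k$ valid for every iterate $\bmpi^k$ that passes the feasibility criterion. Once this sandwich is in hand, both claims follow by simple algebra; for claim (2) we additionally need that $\delta^k \to 0$, which will be read off from the lemmas already proven.

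For claim (1), I would begin by observing that once $\bmpi^k$ fulfills the feasibility criterion it lies in $F^k$ and is by construction optimal for the discretized min-max-regret problem. Since $\bmpi^* \in F^* \subseteq F^k$, this optimality gives $\dot{R}^k(\bmpi^k) \leq \dot{R}^k(\bmpi^*)$, and restricting the inner maximization from $\U$ to the subset $\dot{\U}^k$ yields $\dot{R}^k(\bmpi^*) \leq R(\bmpi^*)$. Chaining the two inequalities and subtracting from $R(\bmpi^k)$ produces
\begin{equation*}
R(\bmpi^k) - R(\bmpi^*) \;\leq\; R(\bmpi^k) - \dot{R}^k(\bmpi^k) \;=\; \delta^k.
\end{equation*}
Plugging this into the order-$p$ property yields $c\|\bmpi^k - \bmpi^*\|^p \leq \delta^k$, i.e.\ $\|\bmpi^k - \bmpi^*\| \leq c^{-1/p}\,\sqrt[p]{\delta^k}$, which is the stated bound after absorbing the $p$-th root into the constant $c$.

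For claim (2), it suffices to argue that $\delta^k \to 0$ along the subsequence of iterations reaching the third stage. By Lemma \ref{lemma:discretization_gets_finer}, for every $\delta' > 0$ there is an iteration at which the current candidate $\bmu^k$ lies within distance $\delta'$ of $\dot{\U}^k$. Lemma \ref{lemma:distance_bounds_regret} then bounds $\delta^k$ above by $\omega(\objectivefunctioninpi(\bmpi^k,\cdot),\delta') + \omega(\optimalvaluefunction(\cdot),\delta')$, and this modulus-of-continuity sum vanishes as $\delta' \to 0$ by uniform continuity. Hence, given the prescribed $\delta > 0$ in the statement, I would choose $k'$ large enough that $\delta^k$ is small enough to make the right-hand side of the bound from claim (1) at most $\sqrt[p]{\delta}$ for every $k \geq k'$.

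The main technical obstacle is that the order-$p$ inequality is only guaranteed on a neighborhood $V$ of $\bmpi^*$, so the sandwich argument can be invoked only once $\bmpi^k$ has entered $V$. This is ensured by Theorem \ref{thm:convergence_3_stage_algo}: accumulation points of $\{\bmpi^k\}_{k\in \N}$ are global minimizers of \ref{opt_problem:minmaxregret_lower_level_without_decision_rule}, so along a suitable convergent subsequence $\bmpi^{k_l} \to \bmpi^*$ and eventually $\bmpi^{k_l} \in V$. (Alternatively, the constant $c$ may be shrunk using compactness of $\setforpi$ so that the bound holds trivially on the complement of $V$.) Apart from this neighborhood caveat, the remaining manipulations are routine once the chain $\dot{R}^k(\bmpi^k) \leq R(\bmpi^*) \leq R(\bmpi^k)$ is in place.
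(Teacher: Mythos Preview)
Your proposal is correct and follows essentially the same route as the paper. For part (1) the paper invokes the sandwich $\dot{R}^k(\bmpi^k)\leq R(\bmpi^*)\leq R(\bmpi^k)$ (which it carries over from the previous convergence theorem rather than rederiving via $\dot{R}^k(\bmpi^*)$ as you do) and then applies the order-$p$ inequality, noting---just as you do---that this requires $k$ large enough for $\bmpi^k$ to lie in the neighborhood $V$. For part (2) the paper works with the modulus of continuity of the regret $\omega(r(\bmpi,\cdot),\alpha)$ directly rather than citing Lemma~\ref{lemma:distance_bounds_regret}, and is slightly more explicit in manufacturing a concrete $k' = 2^{\dimU} + I(\alpha)-1$ that works for all $k\geq k'$; your version is less explicit on this ``for every $k\geq k'$'' quantifier, but the underlying argument is the same.
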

	
	\begin{proof}
	For the proof of the first statement, the estimate \eqref{eq:estimate_regret_values} on for the different regrets is used. For easier notation, the index of the subsequence will be omitted in the following. But it should be kept in mind that only $\bmpi^k$ that fulfill the feasibility criterion are considered. 
	Equation \eqref{eq:estimate_regret_values} can be written as  
	\begin{equation}
		\dot{R}^k(\bmpi^k) \leq R(\bmpi^*) \leq R(\bmpi^k).
		\label{eq:estimate_regret_values_capital_R}
	\end{equation}
	For checking the $\varepsilon$-termination criterion, the current difference in the maximal regret over the discretized uncertainty set $\dot{\U}^k$ and the maximal regret over the whole uncertainty set $\U$, 
	\begin{equation}
		\delta^k = R(\bmpi^k) - \dot{R}^k(\bmpi^k) \geq 0,
	\end{equation}
	is calculated.
	With this and Equation \eqref{eq:estimate_regret_values_capital_R}, it follows that
	\begin{equation}
		R(\bmpi^k) - \underbrace{R(\bmpi^*)}_{\geq \dot{R}^k(\bmpi^k)} \leq R(\bmpi^k) - \dot{R}^k(\bmpi^k) = \delta^k 
	\end{equation}
	holds true.
	By assumption, $\bmpi^*$ is a solution of order $p$, i.e., there exists a constant $c > 0$ and a neighborhood of $\bmpi^*$ with
	\begin{equation}
		c \| \bmpi - \bmpi^*\|^p \leq R(\bmpi) - R(\bmpi^*) 
	\end{equation}
	for every $\bmpi$ in this neighborhood of $\bmpi^*$.
	This estimate is true for $\bmpi = \bmpi^k$ with $k$ large enough. Therefore, it follows
	\begin{equation}
		\|\bmpi^k - \bmpi^*\| \leq \frac{1}{c} \sqrt[p]{R(\bmpi^k) - R(\bmpi^*)} \leq \frac{1}{c} \sqrt[p]{\delta^k}.
	\end{equation}
	
	To prove the second statement, let $\delta>0$. 
	After at most $k_0 \coloneqq 2^\dimU$ iterations, the current solution $\bmpi^k$ of the discretized problem \ref{opt_prob:subproblem_min_max_regret} is feasible for every scenario $u\in \U$ (cf. Lemma \ref{lem:feasibility_interval}).
	From the proof of the first statement of this theorem, it is known that $\|\bmpi^k - \bmpi^* \| \leq \sqrt[p]{\delta}$ holds true if $R(\bmpi^k) - \dot{R}^k(\bmpi^k) \leq c^p \delta$, where $c$ is the constant from Estimate \eqref{eq:solution_of_order_p}.
	
	The regret function is continuous in $\bmu$. Therefore, 
	\begin{equation}
		\omega(r(\bmpi, \cdot), \alpha) \rightarrow 0, \quad \text{as } \alpha \rightarrow 0,
	\end{equation}
	where $\omega(r(\bmpi, \cdot), \alpha)$ is defined by \eqref{eq:omega_for_regret}.
	Therefore, for $\delta>0$, there exists an $\alpha' >0$ with
	\begin{equation}
		\omega(r(\bmpi, \cdot), \alpha) < c^p \delta 
	\end{equation}
	for every $\alpha \leq \alpha'$. 
	
	Since $\U$ is compact, Lemma \ref{lemma:discretization_gets_finer} can be applied. This implies that for any $\alpha >0$, there exists an iteration $k\in \N$ such that for $\bmu^k \in \U \setminus \dot{\U}^k$ there exists an $\bmu \in \dot{\U}^k$ with $\| \bmu^k - \bmu \| < \alpha$. From the proof of Lemma \ref{lemma:discretization_gets_finer} it can be seen that this can take at most $I(\alpha)-1$ iterations, where $I(\alpha)$ is the number of elements in the finite subcover. 
	
	Let $\bmpi^k \in \setforpi$ be an iterate that fulfills the feasibility criterion and let $\bmu^k\in \U$ be the solution of the max-regret problem $P_{\text{Max Regret}}(\bmpi^k, \U)$. 
	Further, let $u\in \dot{\U}^k$ be a scenario from the discretized uncertainty set with $\|\bmu^k - \bmu\| < \alpha$. Then, the following estimate is true:
	\begin{align}
		R(\bmpi^k) &= |r(\bmpi^k, \bmu^k)| \label{eq:estimate_R_and_omega_start}\\
		&= |r(\bmpi^k, \bmu^k) - r(\bmpi^k, \bmu) + r(\bmpi^k, \bmu) | \\
		& \leq |r(\bmpi^k, \bmu^k) - r(\bmpi^k, \bmu)|+ |r(\bmpi^k, \bmu)| \label{eq:estimate_R_and_omega}\\ 
		& \leq \omega(r(\bmpi^k, \cdot), \alpha) + \dot{R}^k(\bmpi^k). \label{eq:estimate_R_and_omega_end}
	\end{align}
	Since $\|\bmu^k - \bmu\| < \alpha$, the first summand of \eqref{eq:estimate_R_and_omega} cannot be larger than $\omega(r(\bmpi^k, \cdot), \alpha)$. And, because of $\bmu\in \dot{\U}^k$, the following estimate is true for the second summand.
	\begin{equation}
		|r(\bmpi^k, \bmu)| = r(\bmpi^k, \bmu) \leq \max_{\bmu'\in \dot{\U}^k} r(\bmpi^k, \bmu') = \dot{R}^k(\bmpi^k)
	\end{equation}
	Rearranging the inequalities \eqref{eq:estimate_R_and_omega_start} -- \eqref{eq:estimate_R_and_omega_end} gives
	\begin{equation}
		R(\bmpi^k) - \dot{R}^k(\bmpi^k) \leq \omega(r(\bmpi^k, \cdot), \alpha).
	\end{equation}
	Putting everything together, it follows that after at most $k' = k_0 + I(\alpha)-1$ iterations, there exists an $\bmu\in \dot{\U}^k$ for the iterate $\bmu^{k}$  with
	\begin{equation}
		\|\bmu^{k'} - \bmu\| < \alpha
	\end{equation}
	for every $k \geq k'$.
	It further follows that
	\begin{equation}
		R(\bmpi^k)-\dot{R}^k (\bmpi^k) \leq \omega(r(\bmpi^k, \cdot), \alpha) < c^p \delta,
	\end{equation}
	which implies
	\begin{equation}
		\| \bmpi^k - \bmpi^*\| \leq \sqrt[p]{\delta}.
	\end{equation}
	\end{proof}
	
    \subsection{Discussion of the algorithm with regard to the interval-shaped uncertainty set}

    In this article, only interval-shaped uncertainty sets are considered. These have some special properties that might be exploited to reduce some of the steps of Algorithm \ref{alg:3_stage_algo}. In the following, it is discussed which steps could be omitted and what the consequences thereof would be.
    
    Due to linearity, a solution $\bmpi$ of \ref{opt_problem:minmaxregret_lower_level_without_decision_rule} fulfills the feasibility criterion, i.e., the semi-infinite constraints of \ref{opt_problem:minmaxregret_lower_level_without_decision_rule}, if $\bmpi$ is feasible for every extremal scenario,
    \begin{equation} \label{eq:constraints_for_U_extr}
	\Aforpi (\bmu) \bmpi \leq \bmb(\bmu) \quad \forall \bmu\in \extremalpointsofU.
	\end{equation}
    Hence, one could replace the semi-infinite constraint in  \ref{opt_problem:minmaxregret_lower_level_without_decision_rule} by finitely many constraints resulting from the  extremal scenarios, i.e., \eqref{eq:constraints_for_U_extr}. 
     
	As a consequence, one could start Algorithm \ref{alg:3_stage_algo} with an initial discretization that contains all extremal scenarios, $\extremalpointsofU \subseteq \dot{\U}^1$. Hence, the feasibility criterion would be always fulfilled and solving the max-infeasibility problem $P_{\text{Max Infeasibility}}$ could be omitted in Algorithm \ref{alg:3_stage_algo}. This implies that Lines \ref{ln:SolvePFeas}~--~\ref{ln:endif} would not be needed in this case. Furthermore, Line \ref{ln:SolvePLowerLevel} could also be omitted, because only scenarios with maximal regret are added to the discretization and for these scenarios, the perfect-information solution can be obtained in Line \ref{ln:SolvePMaxReg}.
	In \cite{Shimizu.1980}, the authors propose an adaptive algorithm for a min-max optimization problem with finitely many constraints that is equivalent to Algorithm \ref{alg:3_stage_algo} without Lines \ref{ln:SolvePLowerLevel} and \ref{ln:SolvePFeas}~--~\ref{ln:endif}. However, their algorithm has different prerequisites for convergence and some statements in their convergence proof are not thoroughly proven.
 
	Although some of the steps of Algorithm \ref{alg:3_stage_algo} could be omitted, it is not advisable to make the initial discretization in Algorithm \ref{alg:3_stage_algo} so large that it contains every extremal scenario. The reason for this is that in this case, the lower level problem \ref{opt_problem:subproblem_min_cost} has to be solved at least for every points in $\extremalpointsofU$ which are $2^{\dimU}$ scenarios. In Section \ref{section:performance_algo_depending_on_initial_discretization} it can be seen that this can lead to a significantly larger computation time than choosing smaller initial discretizations and go through all three stages of Algorithm \ref{alg:3_stage_algo}.

    Independent of the choice of the initial discretization of the uncertainty set, the set from where scenarios are added to the discretization in Algorithm \ref{alg:3_stage_algo} should not be restricted to $\extremalpointsofU$: While the scenario with the largest infeasibility is always one of the extremal points of $\U$, the scenario with the largest regret can be one of the scenarios in the interior of $\U$. For the convergence of Algorithm \ref{alg:3_stage_algo}, it is therefore required that the complete uncertainty set $\U$ is taken into account for \ref{opt_prob:subproblem_max_regret}.

	\section{Results} \label{section:results}
In this section, the performance of the algorithm and its different stages is analyzed depending on the choice of the initial discretization of the uncertainty set. Then, the combination of min-max-regret robustness with an affine linear decision rule is compared to adjustable robustness with the same decision rule. For this purpose, two examples are shown and compared with respect to the evaluation measures maximal cost, nominal cost, and maximal regret.

     The following optimization problem is considered in this chapter:
    \begin{equation}\label{opt_problem:results}
    \begin{aligned}
	\min_{\substack{x(p,t)\in \R \\ p=1,...,P, \ t=1,..,T}} \quad & \sum_{t=1}^T e(t) \left( \sum_{p=1}^{P} c_{p,2} x(p,t)^2 + c_{p,1} x(p,t) + c_{p,0} \right) \\
	\text{s.t. } \quad & 0 \leq x(p, t) \leq Q(p), &&\hspace{-2.6cm} p=1,..., P, \ t=1,...,T,\\
	& h^{min} \leq h(t) \leq h^{max}, &&\hspace{-0.5cm} t=1,...,T,\\
	& h^{min, T} \leq h(T), \\
	& h(t) = h(t-1) + \frac{1}{A} \left(\sum_{p=1}^P {x(p,t)} - u(t)\right),  && \hspace{-0.5cm}t=1,..., T, \\
	& u^{min}(t) \leq u(t) \leq u^{max}(t) &&\hspace{-0.5cm} t=1,...,T,
    \end{aligned}
    \end{equation}
    where the parameters $u(t)$ are uncertain. Such problems arise, for example in drinking water supply in order to find an optimal steering of water pumps under uncertain demand. Another application is an inventory problem, which is similarly modeled in \cite{BenTal.2009}. 
    The decision variables $x(p,t), p=1,...,P, t=1,..,T$ are replaced by affine linear decision rules 
    \begin{equation}
        x(p,t) = \pi^0(p,t) + \sum_{r \in I_t} \pi^r(p,t) u(r), \quad p=1,...,P, \ t=1,..,T.
    \end{equation}
    with $|\pi^r(p, t)| \leq N$ for some $N \in \R^+$. 
    The set $I_t \subseteq \{1,...,T\}, \ t=1,...,T$ is called \emph{informations basis} and contains the indices of the uncertain parameter $\bmu$, to which the decisions $x(p, t), p=1,...,P$ can be adjusted. It is assumed that the information basis is given by $I_t = \{1,...,t-\kappa\}, \ t=1,...,T$. 

	\subsection{Performance depending on initial discretization.} \label{section:performance_algo_depending_on_initial_discretization}
	As mentioned above, the second stage of Algorithm \ref{alg:3_stage_algo} can be skipped if every extremal scenario is contained in the initial discretization, i.e., $\extremalpointsofU \subseteq \dot{\U}^1$. The question that arises now is, whether the initial discretization should be chosen such that it fulfills this condition. This would imply that the initial discretization contains at least $2^\dimU$ scenarios. In this case, the lower level problem \ref{opt_problem:subproblem_min_cost} has to be solved for every one of these scenarios in the initialization of Algorithm \ref{alg:3_stage_algo}. In return, the second stage of the algorithm could be skipped. Note that this corresponds to the algorithm proposed in \cite{Shimizu.1980} (with different assumptions on the objective function and constraints), as explained before. Otherwise, one could choose a smaller initial discretization and solve less problems in the initialization of the algorithm but include the second stage of Algorithm \ref{alg:3_stage_algo}. In the following, Problem \eqref{opt_problem:results} is solved for different sizes of the initial discretization of the uncertainty set $\U$ and the computational times are compared. Problem \eqref{opt_problem:results} is considered with the parameters given in Table \ref{tab:parameter_settings_example_4_1}, where $\varepsilon$ is the value used in the $\varepsilon$-termination criterion (cf. Def. \ref{def:termination_criterion}).

\begin{table}[]
    \centering
    \begin{tabular}{|l|llllll|}
		\hline
		$P$ &  \multicolumn{6}{l|}{2} \\
		$T$ & \multicolumn{6}{l|}{12}\\
		$\kappa$  & \multicolumn{6}{l|}{1} \\
		$N$ & \multicolumn{6}{l|}{500}   \\
        $\varepsilon$ & \multicolumn{6}{l|}{0.001} \\
		$A$& \multicolumn{6}{l|}{2200} \\
		$h^{min}$& \multicolumn{6}{l|}{4.2} \\
		 $h^{min,T}$& \multicolumn{6}{l|}{5}\\
	 	$h(0)$ & \multicolumn{6}{l|}{6}  \\
	 	 $h^{max}$ & \multicolumn{6}{l|}{10} \\
	 	 \hline \hline 
	 	 %\multicolumn{13}{|l|}{parameters for $t$} \\
	 	 $t$ & 1 & 2 & 3 & 4 & 5 & 6 \\ \hline 
	 	 $e(t)$ & 0.5 & 0.5&0.3&0.6& 1.2& 1.1 \\ 
	 	 $u^{min}(t)$ & 429.65 & 758.83 &  918.26& 1377.55 & 1616.99 &
 1071.87 \\ %
	 	 $u^{max}(t)$ &  474.87 & 855.70& 1099.90 & 1683.67 & 2057.98&
 1392.20   \\ \hline 
    $t$ &7 & 8 & 9 & 10 & 11 & 12 \\ \hline
    $e(t)$ &1.0& 0.7& 0.6& 0.9& 1.1& 1.2 \\
    $u^{min}(t)$ &1483.87 & 2002.045 & 1304.35 & 1527.15 &
 1099.80 &  655.59 \\
 $u^{max}(t)$ & 1741.94 & 2496.93 & 1764.71 & 1943.65& 
 1344.20 &  754.28  \\ \hline  \hline 
	 	 %\multicolumn{3}{|l|}{parameters for $p$:} \\
	 	 $p$ & \multicolumn{2}{l}{1} & \multicolumn{4}{l|}{2} \\ \hline
	 	 $c_{p,2}$ & \multicolumn{2}{l}{0.000404} & \multicolumn{4}{l|}{0.0003} \\
	 	 $c_{p,1}$ & \multicolumn{2}{l}{$-0.07334$} & \multicolumn{4}{l|}{$-0.06$} \\
	 	 $c_{p,0}$ & \multicolumn{2}{l}{27.78} & \multicolumn{4}{l|}{20} \\
	 	 $Q(p)$ & \multicolumn{2}{l}{1800} & \multicolumn{4}{l|}{1300} \\
	 	 \hline
\end{tabular}
    \caption{Parameters of Problem \eqref{opt_problem:results} considered in the example in Section \ref{section:performance_algo_depending_on_initial_discretization}.}
    \label{tab:parameter_settings_example_4_1}
\end{table}

	This problem was implemented in Pyomo \cite{Bynum.2021, Hart.2011} and solved 
    using Algorithm \ref{alg:3_stage_algo}. The subproblems of the \nth{1} stage, \ref{opt_prob:subproblem_min_max_regret} and \ref{opt_problem:subproblem_min_cost}, were solved using KNITRO \cite{Byrd2006}, the subproblems of the \nth{2} stage, \ref{opt_prob:subproblem_max_infeasibility}  and \ref{opt_prob:subproblem_max_regret}, were solved using Gurobi \cite{gurobi}. Both solvers were used with default settings.
    Without the decision rule, this problem would have 24 decision parameters. With the affine linear decision rule and the chosen information basis, this number is increased to 156 decision parameters. 
 
    The initial discretizations were chosen as  subsets of the set of all extremal scenarios, $\dot{\U}^1 \subseteq \extremalpointsofU$. The size of $\dot{\U}^1$ is measured as the portion of the number of elements in the initial discretization to the number of all extremal scenarios, i.e. $\frac{|\dot{\U}^1|}{|\extremalpointsofU|}$. For each size, the problem was solved ten times with the initial discretization being set by randomly selecting the appropriate number of elements from $\extremalpointsofU$. 
	\begin{figure}
		\centering
		\includegraphics[width=0.85\textwidth]{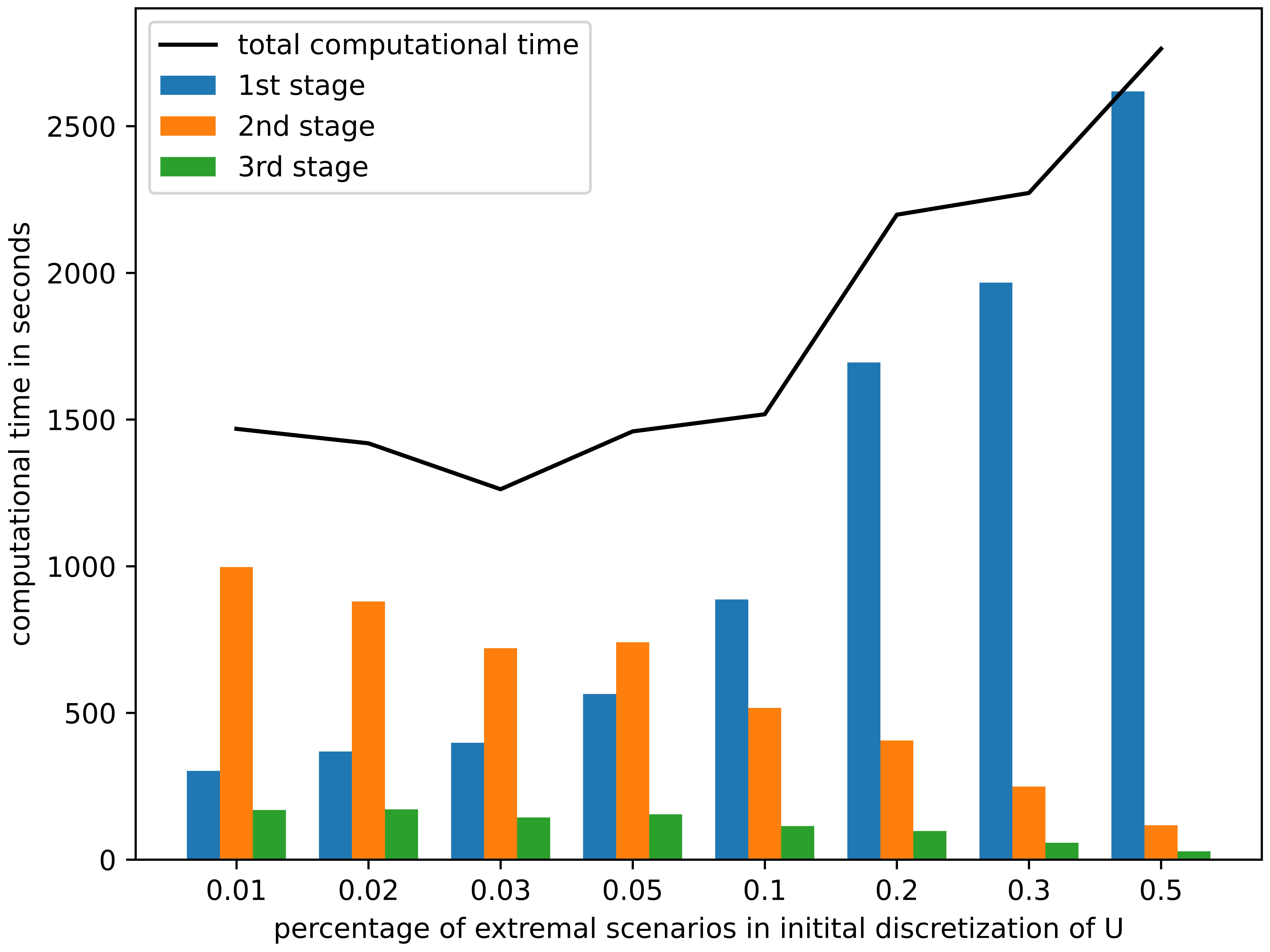}
		\caption{The computational time of the \nth{2} and \nth{3} stage of Algorithm \ref{alg:3_stage_algo} decreases for a increasing number of extremal scenario in the initial discretization of the uncertainty set, while the computational time of the \nth{1} stage increases.} 
		\label{fig:computation_time_3_stage_algo}
	\end{figure}
	In Figure \ref{fig:computation_time_3_stage_algo}, the mean of the computational time for each size of the initial discretization is shown. Each color represents one stage of Algorithm~\ref{alg:3_stage_algo} and the black line gives the total computational time of all three stages. 

    For the computational time spent in each stage, an interesting pattern can be seen. For small initial discretizations, the \nth{1} stage takes not much computational time. With increasing size of $\dot{\U}^1$, the computational time of the \nth{1} stage increases. In contrast to this, the computational time for the \nth{2} stage is high for small initial discretizations and decreases with increasing number of extremal scenarios in the initial discretization. While, for every size of the initial discretization, the least time is spent in the \nth{3} stage out of all stages, the computational time of the \nth{3} stage is also decreasing in the number of extremal scenarios in the initial discretization. 
    The reason for this pattern is that in the case of a small initial discretization, 
    the lower level problem \ref{opt_problem:subproblem_min_cost} must only be solved for a few scenarios in the initialization or Algorithm~\ref{alg:3_stage_algo}. This also implies that the discretized problem \ref{opt_prob:subproblem_min_max_regret} only considers a few scenarios. Therefore, it is often the case that there exist scenarios $\bmu \in \U \setminus \dot{\U}^1$, for which the solution $\bmpi^1$ of the discretized problem is not feasible. This can lead to multiple repetitions of the \nth{1} and \nth{2} stage 
    until the solution of the discretized problem $\bmpi^k$ is feasible for every scenario in the whole uncertainty set $\U$. This is the reason, why the \nth{2} stage takes the most time in the case of small initial discretizations. As the number of extremal scenarios in the initial discretization increases, more problems \ref{opt_problem:subproblem_min_cost} have to be solved in the initialization of Algorithm~\ref{alg:3_stage_algo}, which adds to the computational time of the \nth{1} stage. However, with the increasing size of the initial discretization, the solution of the discretized problem \ref{opt_prob:subproblem_min_max_regret} is feasible for more scenarios. Hence, the \nth{2} stage of Algorithm \ref{alg:3_stage_algo} is entered less often until the solution $\bmpi^k$ of the discretized problem is feasible for the whole uncertainty set $\U$, which leads to the decreasing computational time of this stage. 
	In the example considered here, there is a sweet spot of the size of the initial discretization at 3 percent of the extremal scenarios. However, this cannot be understood as a general rule. 
    The elements in the initial discretization were chosen randomly from all extremal scenarios. A more sophisticated rule for choosing the initial discretization could improve the computational time further. 
    
	Yet, it can be seen clearly that skipping the \nth{2} stage with the initial discretization containing all extremal scenarios, $\extremalpointsofU \subseteq \dot{\U}^1$, does not weigh off the increase in computational time of the \nth{1} stage: the savings in computational time for the \nth{2} stage are overcompensated by the increase in computational time for the \nth{1} stage. This explains why Algorithm \ref{alg:3_stage_algo} should be preferred over the algorithm proposed in \cite{Shimizu.1980} in the case of problems like the one considered here.

\subsection{Comparison of adjustable and min-max-regret robustness with an affine linear decision rule}\label{section:comparison_adjustable_minmaxregret}

In this subsection, min-max-regret robustness combined with an affine linear decision rule is compared to affinely adjustable robustness, where the worst-case costs are minimized. 
The comparison of the concepts is done by an ex-post analysis. For each concept, robust solutions are determined and the resulting costs and regret values are compared for different scenarios. Since both concepts use decision rules, the actual costs depend on the part of the uncertainty, to which the decision can be adjusted. Therefore, the costs can only be analyzed in hindsight after the uncertainty is revealed. 

For the two examples in this section, both, 
the adjustable and the min-max-regret problem are implemented in python with Pyomo \cite{Bynum.2021, Hart.2011}. As in before, the min-max-regret problem is solved with the proposed 3-stage algorithm using KNITRO 
 \cite{Byrd2006} for the subproblems in the \nth{1} stage, and Gurobi \cite{gurobi} in the \nth{2} and \nth{3} stage. The adjustable robust problem is solved with PyROS \cite{Isenberg.2021} with KNITRO as local solver and Gurobi as global solver.

First, consider Problem \eqref{opt_problem:results} with the parameters given in Table \ref{tab:parameter_settings_example_4_2}. In this problem, the decision, and hence the costs, depends on the uncertain parameters $u(1), u(2)$. In the application of drinking water supply systems, the demand varies around some nominal scenario. This is assumed to be given by $\bmu^{nom} = (900, 1700, 1500)$. When this scenario is likely to occur, the costs in this scenario, i.e., the nominal costs, can also be of interest for the user. 

    \begin{table}[]
    \centering
    \begin{tabular}{|l|llllll|}
		\hline
		$P$ &  \multicolumn{6}{l|}{2} \\
		$T$ & \multicolumn{6}{l|}{3}\\
		$\kappa$  & \multicolumn{6}{l|}{1} \\
		$N$ & \multicolumn{6}{l|}{10000}   \\
        $ \varepsilon$ &  \multicolumn{6}{l|}{0.00001 }\\
		$A$& \multicolumn{6}{l|}{1400} \\
		$h^{min}$& \multicolumn{6}{l|}{4.5} \\
		$h^{min,T}$& \multicolumn{6}{l|}{5}\\
	   $h(0)$ & \multicolumn{6}{l|}{6.3}  \\
	$h^{max}$ & \multicolumn{6}{l|}{6.5} \\
	 \hline \hline 
	 	 %\multicolumn{13}{|l|}{parameters for $t$} \\
	 	 $t$ & 1 & 2 & 3 &  & & \\ \hline 
	 	 $e(t)$ & 1 & 1.2 &0.8&& &  \\ 
	 	 $u^{min}(t)$ & 750.00 & 1226.80& 1168.83 & &  & \\ 
	 	 $u^{max}(t)$ &  1125.00 & 2278.35& 1948.05 &&&  \\ \hline  \hline 
	 	 %\multicolumn{3}{|l|}{parameters for $p$:} \\
	 	 $p$ & \multicolumn{2}{l}{1} & \multicolumn{4}{l|}{2} \\ \hline
	 	 $c_{p,2}$ & \multicolumn{2}{l}{0.000404} & \multicolumn{4}{l|}{0.0003} \\
	 	 $c_{p,1}$ & \multicolumn{2}{l}{-0.07334} & \multicolumn{4}{l|}{-0.06} \\
	 	 $c_{p,0}$ & \multicolumn{2}{l}{27.78} & \multicolumn{4}{l|}{20} \\
	 	 $Q(p)$ & \multicolumn{2}{l}{1800} & \multicolumn{4}{l|}{1300} \\
	 	 \hline
\end{tabular}
    \caption{Parameters of Problem \eqref{opt_problem:results} considered in the first example in Section \ref{section:comparison_adjustable_minmaxregret}.}
    \label{tab:parameter_settings_example_4_2}
\end{table}

 In Figure \ref{fig:cost_comparison_adjustable_minmaxregret} it can be seen, which model performs best (with respect to the costs) for the possibly occurring scenarios $u(1), u(2)$.
    \begin{figure}
        \centering
        \includegraphics[width=.8\textwidth]{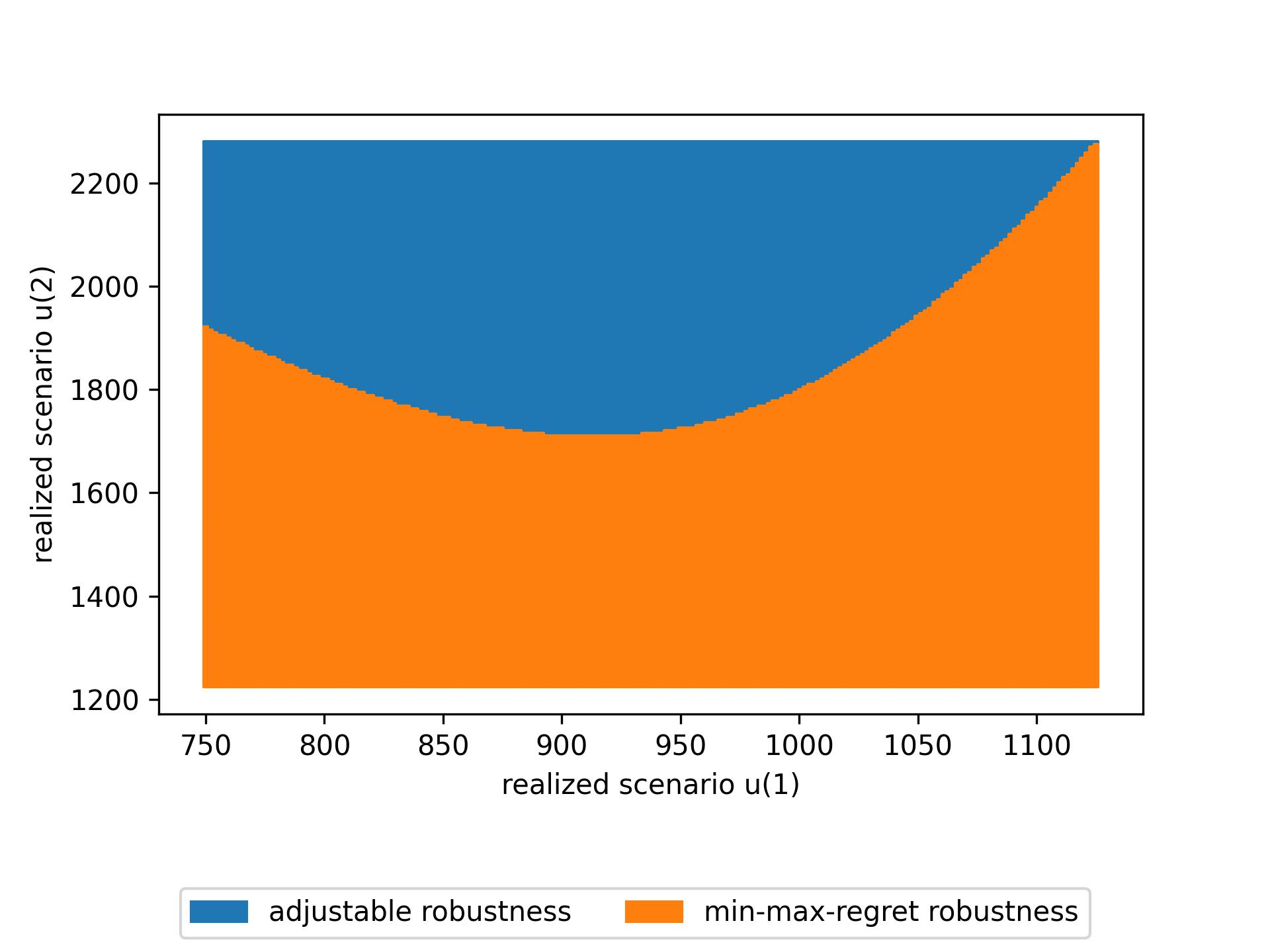}
        \caption{For both concepts, adjustable and recoverable robustness, there exist scenarios, where this concept is preferable in an ex-post comparison.}
        \label{fig:cost_comparison_adjustable_minmaxregret}
    \end{figure}
    It can be seen that both concepts have regions where their decision results in lower costs than the decision of the other robustness concept. Adjustable robustness performs better than min-max-regret robustness in many scenarios with a high value of $u(2)$. However, min-max-regret robustness also performs well when both uncertain parameters, $u(1)$ and $u(2)$, are high. 
    Figure \ref{fig:cost_comparison_adjustable_minmaxregret} shows the preferable regions for each scenario, but not the absolute values of the costs. Table \ref{tab:cost_comparison_adjsutable_minmaxregret} gives the worst-case costs, the nominal costs and the maximal regret of the adjustable and min-max-regret robust decisions. 
    \begin{table}[]
        \centering
        \begin{tabular}{|l|c|c|}
        \hline
        & adjustable robustness & min-max-regret robustness\\ \hline
             worst-case cost 	 &	 616.962	& 616.9629	\\
             & (best) & ($+0.0001$ \%)\\ \hline
nominal cost 	&433.1376	& 430.8811	\\ 
 & ($ + 5.2369$ \%)& (best) \\ \hline
maximal regret 	 	& 249.2559	 & 227.2854	 \\ 
& ($+ 9.6665 $ \%) & (best)\\ \hline
        \end{tabular}
        \caption{In the example, adjustable robustness and min-max-regret robustness have almost the same worst-case cost, while min-max-regret has considerably lower nominal cost and maximal regret.}
        \label{tab:cost_comparison_adjsutable_minmaxregret}
    \end{table}

It can be seen that both concepts have almost the same worst-case cost. However, in the nominal scenario, the min-max-regret robust decision results in lower costs. And, when the maximal regret over all scenarios is considered, min-max-regret robustness outperforms adjustable robustness considerably. 
The difference of the two concepts in the latter two criteria is even more dramatically in the next example. 

Now, the two robustness concepts are compared for a larger example. Consider Problem \eqref{opt_problem:results} with the parameters as given in Table \ref{tab:parameter_settings_example_4_2_2} and with the nominal scenario ${u^{nom}= (900, 1200, 1500, 1200, 850, 1000, 900)}$. 
    \begin{table}[]
    \centering
    \begin{tabular}{|l|lllllll|}
		\hline
		$P$ &  \multicolumn{7}{l|}{1} \\
		$T$ & \multicolumn{7}{l|}{7}\\
		$\kappa$  & \multicolumn{7}{l|}{2} \\
		$N$ & \multicolumn{7}{l|}{10000}   \\
        $ \varepsilon$ &  \multicolumn{7}{l|}{0.000001 }\\
		$A$& \multicolumn{7}{l|}{1400} \\
		$h^{min}$& \multicolumn{7}{l|}{4.5} \\
		$h^{min,T}$& \multicolumn{7}{l|}{5}\\
	   $h(0)$ & \multicolumn{7}{l|}{5.5}  \\
	$h^{max}$ & \multicolumn{7}{l|}{7} \\
	 \hline \hline 
	 	 %\multicolumn{13}{|l|}{parameters for $t$} \\
	 	 $t$ & 1 & 2 & 3 & 4 & 5& 6 & 7\\ \hline 
	 	 $e(t)$ & 1.0 & 1.0 &0.8& 1.0 & 1.0 & 1.0 & 1.0  \\ 
	 	 $u^{min}(t)$ & 750.00 &  865.98 & 1168.83 &  979.59 &  772.73 &
  909.09 & 734.69\\ 
	 	 $u^{max}(t)$ & 1125.00 & 1608.25 & 1948.05 & 1469.39 &   944.44 &
 1111.11 & 1102.04  \\ \hline  \hline 
	 	 %\multicolumn{3}{|l|}{parameters for $p$:} \\
	 	 $p$ & \multicolumn{2}{l}{1} & \multicolumn{5}{l|}{} \\ \hline
	 	 $c_{p,2}$ & \multicolumn{2}{l}{0.000404} & \multicolumn{5}{l|}{} \\
	 	 $c_{p,1}$ & \multicolumn{2}{l}{-0.07334} & \multicolumn{5}{l|}{} \\
	 	 $c_{p,0}$ & \multicolumn{2}{l}{27.78} & \multicolumn{5}{l|}{} \\
	 	 $Q(p)$ & \multicolumn{2}{l}{1800} & \multicolumn{5}{l|}{} \\
	 	 \hline
\end{tabular}
    \caption{Parameters of Problem \eqref{opt_problem:results} considered as second example in Section \ref{section:comparison_adjustable_minmaxregret}.}
    \label{tab:parameter_settings_example_4_2_2}
\end{table}

The resulting robust solutions are evaluated according to the three criteria of worst-case cost, nominal cost and maximal regret. The costs are shown in Table \ref{tab:cost_comparison_adjustable_minmaxregret_2}
\begin{table}
	\centering
	\begin{tabular}{|l|c|c|c|}%{|l|p{3cm}|p{3cm}|p{3cm}|}
		\hline 
		& adjustable robustness & min-max regret robustness \\ 
		%&  &  \\ 
  \hline
		worst-case cost & 3708.5053  &3717.1894 \\
						& (best) &   ($+ 0.23$ \%) \\ \hline
		nominal cost & 2752.0929& 2581.0971 \\
		 			& ($+ 6.62$ \%) & (best) \\ \hline
		maximal regret & 837.8284 & 496.0199 \\ 
				& 	($+ 68.91 $\%) &  (best)  \\
 \hline
	\end{tabular}
	\caption{In the second example considered in Section \ref{section:comparison_adjustable_minmaxregret}, the min-max-regret robust decision results in slightly higher worst-case cost, but outperforms adjustable robustness w.r.t.\ the nominal cost and the maximal regret.}
	\label{tab:cost_comparison_adjustable_minmaxregret_2}
\end{table}
While adjustable robustness yields the lower worst-case cost, min-max-regret robustness outperforms with respect to the nominal cost as well as the maximal regret. Furthermore, the min-max-regret robust decision results in worst-case cost that are close to the worst-case cost of adjustable robustness. In contrast to this, adjustable robustness is signifcantly worse in the nominal scenario and even more so with respect to the maximal regret.

	\section{Discussion} \label{section:discussion}

The examples considered in Section \ref{section:comparison_adjustable_minmaxregret}, where the results of adjustable and min-max-regret robustness are compared are relatively small. When larger examples are considered, PyROS \cite{Isenberg.2021} may encounter difficulties and potentially fail to find solutions. 
For instance, solving the adjustable robust formulation of the problem considered in Section \ref{section:performance_algo_depending_on_initial_discretization} with 156 decision variables, 
using PyROS with Gurobi \cite{gurobi} as the global solver and either KNITRO \cite{Byrd2006} or Gurobi as the local solver did not yield a robust solution. In contrast, the min-max-regret version of the same problem was successfully solved by the algorithm proposed in this paper. 
For practitioners facing large-scale optimization problems with inherent uncertainties, the traditional optimization approach implemented in PyROS may prove inadequate. In such cases, it seems advantageous to consider min-max-regret robustness instead of the worst-case approach. Then, Algorithm~\ref{alg:3_stage_algo} can be applied and generates a robust solution where none was found. 

Despite the algorithm's ability to handle large problems effectively, 
there is potential for further enhancing computational performance. In the example in Section~\ref{section:performance_algo_depending_on_initial_discretization}, the computational time was compared for different sized uncertainty sets with randomly chosen extremal scenarios. One idea for improvement could involve refining the selection criteria for the initial discretization. A promising strategy might be to choose scenarios that are significantly distinct from others in the set, based on a suitable measure of distance. Exploring such criteria could lead to even more efficient solutions and is an intriguing direction for future research.

 \section{Conclusion}\label{section:conclusion}
 In this work, the concepts of adjustable robust optimization and min-max-regret robust optimization were combined into a new class of robust optimization problem. The combination of the two concepts allows for robust solutions that can be adjusted to realized scenarios and are less conservative than worst-case-cost optimization.  
 A convergent three-stage algorithm was developed for the min-max-regret problem based on adaptive discretization of the uncertainty set and it was shown to be efficient.

The numerical examples provided within this work illustrate the superiority of the new concept over traditional adjustable robust optimization, particularly in its ability to adeptly handle large instances of min-max-regret problems.

	%\section*{Acknowledgments}
	
	\section*{Declaration of Interest Statement}
 The authors report there are no competing interests to declare.
	
	%\begin{thebibliography}{literatur}
	\bibliographystyle{plain}
	\bibliography{literatur}	
	%\end{thebibliography}
	
%	\section{Appendices}

\end{document}